\providecommand{\BBb}[1]{{\mathbb{#1}}}
\providecommand{\cal}[1]{{\mathcal{#1}}}   
\newcommand{\C}{{\BBb C}}
\newcommand{\dual}[2]{\langle\,#1,\,#2\,\rangle}
\newcommand{\fracc}[2]{{
                \textstyle\frac{#1}{\raise 1pt\hbox{$\scriptstyle #2$}}}}
\newcommand{\fracnp}{\fracc np}
\newcommand{\fracci}[2]{{\frac{#1}{\raise 1pt\hbox{$\scriptscriptstyle #2$}}}}
\newcommand{\fracpi}{\fracci1p}
\newcommand{\im}{\operatorname{i}}
\newcommand{\imb}{\hookrightarrow}
\newcommand{\loc}{\operatorname{loc}}
\newcommand{\nrm}[2]{\|#1\|_{#2}}
\newcommand{\Nrm}[2]{\bigl\|#1\bigr\|_{#2}}
\newcommand{\norm}[2]{\mathinner{\|}#1\,|#2\|}
\newcommand{\Norm}[2]{\mathinner{\bigl\|\,#1\,\big|#2\bigr\|}}
\newcommand{\N}{\BBb N}
\newcommand{\op}[1]{\operatorname{#1}}
\newcommand{\R}{{\BBb R}}
\newcommand{\Rn}{{\BBb R}^{n}}
\newcommand{\supp}{\operatorname{supp}}
\newcommand{\Z}{\BBb Z}
\newtheorem{thm}{Theorem}
\newtheorem{prop}[thm]{Proposition}
\newtheorem{lem}[thm]{Lemma}
\theoremstyle{definition}
\theoremstyle{remark}
\newtheorem{rem}[thm]{Remark}
\title[Sobolev embeddings for Lizorkin--Triebel spaces]{%
A direct proof of Sobolev embeddings for 
quasi-homogeneous Lizorkin--Triebel spaces with mixed norms}
\author[J.~Johnsen]{Jon Johnsen}
\address{\small Department of Mathematical Sciences,
 Aalborg University,
 Fredrik~Bajers Vej 7G,
 DK--9220 Aalborg {\O}st, Denmark }
\email{jjohnsen@math.aau.dk}
\author[W.~Sickel]{Winfried Sickel}
\address{\small Mathematics Department,
 Friedrich-Schiller-University Jena,
 Ernst-Abbe-Platz 2,
 D--07743 Jena, Germany} 
\email{sickel@minet.uni-jena.de}
\keywords{Function spaces of Besov and Lizorkin--Triebel type, anisotropic
spaces, mixed norms, Sobolev embeddings, geometric rectangle condition} 
\subjclass[2000]{46E35}
\begin{document}

 \begin{abstract}
The article deals with a simplified proof of the Sobolev embedding
theorem for Lizorkin--Triebel spaces (that contain the $L_p$-Sobolev
spaces $H^s_p$ as special cases). The method extends to a proof of the
corresponding fact for general Lizorkin--Triebel spaces based
on mixed $L_p$-norms. In this context a
Nikol$'$skij--Plancherel--Polya inequality  for sequences of functions
satisfying a geometric rectangle condition is proved. The results extend
also to anisotropic spaces of the quasi-homogeneous type.
 \end{abstract}
\maketitle
\section{Introduction}
To give an overview, we first comment on standard Lizorkin--Triebel spaces 
(i.e.\ isotropic, inhomogeneous spaces with unmixed norms). These are
throughout denoted by $F^{s}_{p,q}$.

\bigskip

Since around 1977 the question of Sobolev
embeddings of Lizorkin--Triebel spaces has been answered affirmatively,
with a unified proof of the following

\begin{prop}
  \label{sob-prop}
Let $s$, $s_0\in \R$ and $p$, $p_0\in \,]0,\infty[$ be given such
that
\begin{equation}
  s-\fracnp=s_0-\fracc n{p_0},\quad p>p_0.
  \label{sspp-ineq}
\end{equation}
There is then a continuous embedding
$F^{s_0}_{p_0,\infty}(\Rn)\imb F^{s}_{p,q}(\Rn)$ for every $q\in \,]0,\infty]$.
\end{prop}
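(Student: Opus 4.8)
The plan is to reduce the embedding to a vector-valued Nikol$'$skij--Plancherel--Polya inequality at the level of the Littlewood--Paley decomposition. Fixing an admissible resolution of unity $(\varphi_j)_{j\ge 0}$ and setting $f_j=\varphi_j(D)f$, so that $\supp\widehat{f_j}\subset\{|\xi|\le c\,2^{j}\}$, one has by definition $\norm{f}{F^{s_0}_{p_0,\infty}(\Rn)}=\|\,\sup_j 2^{js_0}|f_j|\,\|_{L_{p_0}}=:\|g\|_{L_{p_0}}$ and $\norm{f}{F^{s}_{p,q}(\Rn)}=\|(\sum_j 2^{jsq}|f_j|^q)^{1/q}\|_{L_p}$. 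Thus it suffices to prove, for band-limited sequences satisfying this spectral (ball, and in the mixed-norm case rectangle) condition and under the balance $s-\fracnp=s_0-\fracc n{p_0}$ with $p>p_0$, the estimate $\|(\sum_j 2^{jsq}|f_j|^q)^{1/q}\|_{L_p}\le C\,\|g\|_{L_{p_0}}$ with $C$ independent of $f$.

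The two ingredients are the Peetre maximal estimate and the single-block Nikol$'$skij inequality. From the band limitation one obtains, for every $0<t<\infty$ and $N$ large, the submean bound $|f_j(x)|\lesssim (M|f_j|^{t}(x))^{1/t}$, where $M$ is the Hardy--Littlewood maximal operator; and the scalar inequality $\|f_j\|_{L_p}\lesssim 2^{jn(1/p_0-1/p)}\|f_j\|_{L_{p_0}}=2^{j(s_0-s)}\|f_j\|_{L_{p_0}}$, whose exponent is dictated precisely by the Sobolev relation. Since $p>p_0$ forces $s<s_0$, the corresponding factor $2^{-j(s_0-s)}$ decays.

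The crux is to transport the single-block $L_{p_0}\to L_p$ gain into the nonlinear $\ell_q(L_p)$ expression: a term-by-term application only reproduces the Besov embedding, because there the $\ell_q$-sum sits outside the $L_p$-norm. I would combine two pointwise bounds on $a_j(x):=2^{js}|f_j(x)|$ --- the decaying bound $a_j(x)\le 2^{-j(s_0-s)}g(x)$ coming from $|f_j|\le 2^{-js_0}g$, and a second bound carrying the lower integrability $p_0$ through a maximal function --- and, for each fixed $x$, split the sum over $j$ at a threshold chosen to balance the two contributions. After summing the geometric tail, this yields a pointwise estimate for $(\sum_j a_j^q)^{1/q}$ in which the Sobolev balance cancels the $j$-powers, leaving a product of $g$ with a maximal term whose $L_p$-norm is controlled by $\|g\|_{L_{p_0}}$ via H\"older together with the Fefferman--Stein boundedness of $M$; the strict inequality $p>p_0$ is exactly the room this step consumes.

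The main obstacle is precisely this interchange of summation and integration, and two technical points will require care. First, the bookkeeping of exponents in the H\"older/threshold step must be arranged so that exactly the power $p_0$ lands on $g$ while the residual maximal operator acts at an exponent where it is bounded; this is where $p>p_0$ is used and where the geometric rectangle condition enters in the mixed-norm generalisation. Second, since the statement is claimed for all $q\in\,]0,\infty]$, including small $q$ where $\|\cdot\|_{\ell_q}$ is only a quasi-norm, the maximal-function exponent $t$ must be taken below $\min(p_0,q)$ so that the vector-valued maximal inequalities remain available; the monotonicity $F^{s}_{p,q_1}\imb F^{s}_{p,q_2}$ for $q_1\le q_2$ then need not be invoked, and the argument covers the full range of $q$ uniformly.
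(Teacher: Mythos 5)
Your skeleton---reduce to a band-limited sequence inequality, invoke the single-block Nikol$'$skij inequality for the $L_{p_0}\to L_p$ gain, and split the $j$-sum at an $x$-dependent threshold balancing two pointwise bounds---is exactly the architecture of the paper's proof (the threshold split, after summing the geometric tails, is precisely the content of Lemma~\ref{sss-lem}). But the crux step as you describe it contains a genuine gap: a maximal function cannot ``carry the lower integrability $p_0$'' into $L_p$. The Hardy--Littlewood and Fefferman--Stein maximal estimates are exponent-preserving, so your second factor $(M|f_j|^t)^{1/t}\lesssim 2^{-js_0}\,(Mg^t)^{1/t}$ lies, like $g$ itself, only in $L_{p_0}$; with a pointwise product $g^{\alpha}H^{1-\alpha}$ in which \emph{both} factors are merely in $L_{p_0}$, H{\"o}lder forces $\fracc{\alpha}{p_0}+\fracc{1-\alpha}{p_0}=\fracp$, i.e.\ $p=p_0$, contradicting $p>p_0$. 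Equivalently: once you insist, correctly, that ``exactly the power $p_0$ lands on $g$'' (i.e.\ $\theta p=p_0$), the residual factor is forced into $L_\infty$, so it must be $x$-independent---it cannot be a maximal term. Worse, the Peetre bound $|f_j|\lesssim (M|f_j|^t)^{1/t}$ yields the \emph{same} decay rate $2^{-j(s_0-s)}$ as your trivial bound $a_j(x)\le 2^{-j(s_0-s)}g(x)$, so between these two bounds there is nothing to balance and the threshold split degenerates.

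The repair is what the paper does: apply the Nikol$'$skij--Plancherel--Polya inequality \eqref{nik-ineq} with target exponent $r=\infty$, giving $\nrm{u_j}{\infty}\le c\,2^{jn/p_0}\nrm{u_j}{p_0}$, hence the $x$-independent second bound $\sup_j 2^{s_1j}\nrm{u_j}{\infty}\le c\norm{u}{F^{s_0}_{p_0,\infty}}$ with $s_1=s-\fracnp$. Balancing this constant bound against $2^{s_0j}|u_j(x)|\le g(x)$ via Lemma~\ref{sss-lem} yields the pointwise estimate \eqref{aux-ineq}, and taking $L_p$-norms with $\theta p=p_0$ closes the argument. No maximal functions, no vector-valued inequalities, and no auxiliary exponent $t$ are needed; in particular your worry about $t<\min(p_0,q)$ for small $q$ is moot, since the interpolation inequality \eqref{sss-ineq} holds for all $q\in\,]0,\infty]$ outright. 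Your listed ingredient $\nrm{f_j}{p}\lesssim 2^{jn(1/p_0-1/p)}\nrm{f_j}{p_0}$ is the right tool but at the wrong exponent pair: the gain must be cashed in at $r=\infty$, where it produces the bounded factor that H{\"o}lder requires.
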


Specifically this means that there exists a number $C>0$, depending
on the parameters, such that the following inequality is valid for every
$u\in F^{s_0}_{p_0,\infty}(\Rn)$:
\begin{equation}
  \bigl(\int_{\Rn} (\sum_{j=0}^\infty
  2^{sjq}|u_j(x)|^q)^{\frac{p}{q}}\,dx\bigr)^\fracpi
\le C
  \bigl(\int_{\Rn} (\sup_{j\in \N_0}
  2^{s_0j}|u_j(x)|)^{p_0}\,dx\bigr)^\fracci1{p_0}.
  \label{sob-ineq}
\end{equation}

To explain the notation,
note that a Littlewood--Paley decomposition $1=\sum_{j=0}^\infty \Phi_j(\xi)$ 
can be obtained by letting $\Phi_j=\Psi_j-\Psi_{j-1}$ for $j>0$ when
$\Psi_j(\xi)=\psi(2^{-j}|\xi|)$ for some $\psi\in  C^\infty(\R,\R)$
fulfilling $\psi(t)=1$ for $t<1$ and $\psi(t)=0$ for $t>2$. 

By definition, a tempered distribution $u\in \cal
S'(\Rn)$ is in the Lizorkin--Triebel space $F^{s}_{p,q}(\Rn)$
when the quantity on the left hand side of \eqref{sob-ineq} is
finite; hereby $u_j=\cal F^{-1}(\Phi_j\hat u)$. 
As usual $\nrm{f}{p}$ denotes the norm of a function $f$ in $L_p(\Rn)$ with
respect to the Lebesgue measure $\lambda$ on $\Rn$. 

Jawerth's original proof \cite{J} of the Sobolev inequalities 
\eqref{sob-ineq} was given for the
homogeneous spaces $\dot F^s_{p,q}$, utilising the rewriting of
$L_p$-norms via the distribution function ($0<p<\infty$) 
\begin{equation}
  \int_{\Rn}|f(x)|^p\,dx=p\int_0^\infty 
  t^{p-1}\lambda(\{\,x\mid |f(x)|>t \,\})\,dt.  
\end{equation}
However, readers familiar with the subject may recall that for the
inhomogeneous $F^{s}_{p,q}$, 
it is not possible for every $t>0$ to find $j\in \N_0$ such that $2^{jn/p}<t$
(unlike $\dot F^{s}_{p,q}$, where $j\in\Z$), so
Jawerth's proof needs an adaptation to this case.
Triebel \cite{T0,T2} introduced a splitting into the
$t$-intervals $\,]0,K[\,$ and $\,]K,\infty[\,$, where 
$K$ essentially was the $q^{\op{th}}$ root of $\norm{u}{F^{s}_{p,q}}$.

Since then this proof has widely been considered
`best possible'. Perhaps this is because the strategy of Jawerth and Triebel
covers all $p\in \,]0,\infty[\,$ in an elegant way (at least for $\dot
F^{s}_{p,q}$), whilst previous attempts did not cover all cases.

In comparison, the Besov spaces $B^{s}_{p,q}(\Rn)$ have 
corresponding embeddings with a well-known one-line proof based on the
Nikol$'$skij--Plancherel--Polya inequality that we now recall.

For $0<p\le r\le\infty$ there exists a $c>0$
such that for every
$f\in \cal S'(\Rn)\cap L_p$ with $\supp\cal Ff$ compact, say
contained in the closed ball $B(0,R)=\{\,\xi\in\Rn \mid |\xi|\le R\,\}$,
\begin{equation}
  \nrm{f}{r}\le c R^{\fracci np-\fracci nr}\nrm{f}{p}.
  \label{nik-ineq}
\end{equation}
Applying this to each $u_j$ ($R=2^{j+1}$) in $\norm{u}{B^{s}_{p,q}}:=(\sum
2^{sjq}\nrm{u_j}{p}^q)^{\fracci1q}$, one finds at once that
$B^{s_0}_{p_0,q}\imb B^{s}_{p,q}$ when \eqref{sspp-ineq} holds.

Generally $F^{s}_{p,q}$-spaces are rather more
complicated to treat than $B^{s}_{p,q}$-spaces, so it would seem plausible
that even the basic Sobolev inequalities in \eqref{sob-ineq} are
substantially more technical to achieve for the $F^s_{p,q}$.

But 
also \eqref{sob-ineq} has a short proof based on the
Nikol$'$skij--Plancherel--Polya inequality \eqref{nik-ineq}.
The trick is to handle the infinite sum by means of the following 
result on the weighted sequence spaces
$\ell^s_q$, that (for $q<\infty$) is normed by  
\begin{equation}
  \norm{a_j}{\ell^s_q}= (\sum_{j=0}^\infty 2^{sjq}|a_j|^q)^{\fracci1q}.
\end{equation}

\begin{lem}
  \label{sss-lem}
Let real numbers $s_1<s_0$ be given, and $\theta\in \,]0,1[\,$.
For $0<q\le\infty$ there is $c>0$ such that
\begin{equation}
  \norm{a_j}{\ell^{\theta s_0+(1-\theta)s_1}_q}\le c
    \norm{a_j}{\ell^{s_0}_\infty}^\theta
    \norm{a_j}{\ell^{s_1}_\infty}^{1-\theta}
  \label{sss-ineq}
\end{equation}
holds for all complex sequences $(a_j)_{j\in \N_0}$ in $\ell^{s_0}_\infty$.
\end{lem}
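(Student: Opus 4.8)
The plan is to exploit the two termwise bounds that the two $\ell_\infty$-norms encode, and then recover summability by a single well-chosen split of the index range. Abbreviate $A_0:=\norm{a_j}{\ell^{s_0}_\infty}$, $A_1:=\norm{a_j}{\ell^{s_1}_\infty}$ and $s:=\theta s_0+(1-\theta)s_1$. Straight from the definitions,
\[
  |a_j|\le A_0\,2^{-s_0 j}\quad\text{and}\quad |a_j|\le A_1\,2^{-s_1 j}\qquad(j\in\N_0).
\]
Since $s_1<s_0$ and $j\ge 0$, the first bound gives $2^{s_1 j}|a_j|\le A_0\,2^{(s_1-s_0)j}\le A_0$, whence $A_1\le A_0<\infty$; thus the right-hand side of \eqref{sss-ineq} is finite and we may assume $A_0,A_1>0$ (otherwise every $a_j$ vanishes). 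The case $q=\infty$ is then immediate from the termwise interpolation identity $2^{sj}|a_j|=(2^{s_0 j}|a_j|)^{\theta}(2^{s_1 j}|a_j|)^{1-\theta}\le A_0^{\theta}A_1^{1-\theta}$, so \eqref{sss-ineq} holds with $c=1$ after taking the supremum over $j$.

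The substance of the lemma is the case $q<\infty$. Here the tempting route --- applying H\"older's inequality uniformly in $j$ --- fails, since the same identity makes the powers of $2^j$ cancel exactly and leaves a divergent series; this is the one point that needs a genuine idea. The remedy I would use is to split the sum at the crossover index $j^*:=(s_0-s_1)^{-1}\log_2(A_0/A_1)\ge 0$, where the two termwise bounds coincide, using the $A_1$-bound for $j\le N$ and the $A_0$-bound for $j>N$, with $N:=\lfloor j^*\rfloor$. On the lower range one has $s-s_1=\theta(s_0-s_1)>0$, so $\sum_{j=0}^{N}2^{(s-s_1)jq}$ is a geometric series controlled by its top term $\lesssim 2^{(s-s_1)Nq}$; on the upper range $s-s_0=-(1-\theta)(s_0-s_1)<0$, so $\sum_{j>N}2^{(s-s_0)jq}$ is controlled by its first term $\lesssim 2^{(s-s_0)Nq}$. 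Together these give
\[
  \norm{a_j}{\ell^s_q}^q\le C_1\,A_1^q\,2^{(s-s_1)Nq}+C_2\,A_0^q\,2^{(s-s_0)Nq},
\]
with $C_1,C_2$ depending only on $s_0-s_1$, $\theta$ and $q$.

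It then remains to insert the choice of $N$. Because $2^{N(s_0-s_1)}$ is comparable to $A_0/A_1$ --- the integer truncation of $j^*$ costing only a bounded factor $2^{\pm(s_0-s_1)q}$ --- both terms collapse to a constant multiple of $A_0^{\theta q}A_1^{(1-\theta)q}$: the identity $(s-s_1)/(s_0-s_1)=\theta$ turns the first term into $A_1^q(A_0/A_1)^{\theta q}=A_0^{\theta q}A_1^{(1-\theta)q}$, and $(s-s_0)/(s_0-s_1)=-(1-\theta)$ turns the second into the same expression. Taking $q$-th roots yields \eqref{sss-ineq} with $c=(C_1+C_2)^{1/q}$. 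The only real care needed is the bookkeeping of this rounding and of the geometric-series constants; conceptually the mechanism is simply the balancing of a growing against a decaying geometric tail at their common crossover, which is precisely what manufactures the interpolation exponents $\theta$ and $1-\theta$.
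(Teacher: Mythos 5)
Your proof is correct: the termwise bounds $|a_j|\le A_i2^{-s_ij}$, the observation $A_1\le A_0$, the split at the crossover index $N=\lfloor (s_0-s_1)^{-1}\log_2(A_0/A_1)\rfloor$, the two geometric-series estimates, and the rounding bookkeeping all check out for every $0<q\le\infty$ (including $q<1$, since no triangle inequality is ever invoked). The paper itself gives no proof of Lemma~\ref{sss-lem} but points to the elementary argument of Brezis and Mironescu \cite{BrMi01}, which is precisely this balancing of a growing against a decaying geometric tail at their common crossover, so your route coincides with the intended one.
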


This result was to our knowledge first crystallised in the works of Brezis
and Mironescu \cite{BrMi01}; cf.\ the elementary proof there.

In what follows we shall discuss certain generalizations 
of Proposition~\ref{sob-prop}, and of \eqref{nik-ineq},
to mixed $L_p$-norms and anisotropic smoothness.
For the reader's convenience we shall first give the details of the proof 
in the isotropic (unmixed) case. Hence we 
prove Proposition \ref{sob-prop};
as it will become clear in Sections~2 and 3, 
the method carries over straightforwardly to the more general situation.

\begin{proof}[Proof of Proposition~1 
(based on \eqref{nik-ineq}, \eqref{sss-ineq})]
For $u\in F^{s_0}_{p_0,\infty}$ the claim is obtained 
by interpolating its $F^{s}_{p,q}$-norm between those of
$B^{s_1}_{\infty,\infty}$ with
$s_1:=s_0-\fracc n{p_0}=s-\fracnp$ and the given space 
$F^{s_0}_{p_0,\infty}$:
 
Since $p>p_0$ there is some
$\theta\in \,]0,1[\,$ so that $\theta\fracc n{p_0}=\fracnp$, hence
\begin{equation}
  \theta s_0+(1-\theta)s_1=\theta s_0+(1-\theta)(s_0-\fracc n{p_0})
 = s-\fracnp+\theta\fracc n{p_0}=s.
\end{equation}
Clearly $q<\infty$ suffices in \eqref{sob-ineq}, 
and for each $x\in \Rn$, Lemma~\ref{sss-lem} gives
\begin{equation}
  (\sum_{j=0}^\infty 2^{sjq}|u_j(x)|^q)^{\fracci1q}
\le c
  \norm{u_j(x)}{\ell^{s_0}_\infty}^\theta
  (\sup_{j}2^{s_1j}\nrm{u_j}{\infty})^{1-\theta}.
  \label{aux-ineq}
\end{equation}
Here $\nrm{u_j}{\infty}\le c'2^{j\fracci n{p_0}}\nrm{u_j}{p_0}$ follows from
\eqref{nik-ineq} since  $u_j$ is in $L_{p_0}$ with 
$\supp\cal Fu_j\subset B(0,2^{j+1})$.
The definition of $s_1$ therefore yields
\begin{equation}
  \sup_{j}2^{s_1j}\nrm{u_j}{\infty}\le
  c'  \sup_{j}2^{(s_1+\fracci n{p_0})j}\nrm{u_j}{p_0}
  \le c' \norm{u}{F^{s_0}_{p_0,\infty}}.
\end{equation}
Hence the fact that $\theta p=p_0$ gives,
by taking $L_p$-norms in \eqref{aux-ineq}, 
\begin{equation}
  \norm{u}{F^s_{p,q}}\le c \norm{u}{F^{s_0}_{p_0,\infty}}^\theta
  (c'\norm{u}{F^{s_0}_{p_0,\infty}})^{1-\theta}=c''
  \norm{u}{F^{s_0}_{p_0,\infty}}.
\end{equation}
That is, \eqref{sob-ineq} is proved.
\end{proof}

\begin{rem}
For simplicity we have taken the sum-exponent $q_0=\infty$ on the
right hand side of \eqref{sob-ineq}, but 
the other Sobolev inequalities can be recovered via the simple embedding
$F^{s_0}_{p_0,q_0}\imb F^{s_0}_{p_0,\infty}$.
\end{rem}

The Nikol$'$skij--Plancherel--Polya
inequality \eqref{nik-ineq} will be generalised to mixed $L_p$-norms
in Section~\ref{mixd-sect} below.
In the course of the proof given there, 
we utilise \eqref{nik-ineq} in the above unmixed form; 
so to provide a complete overview we insert an argument for
this (known) standard version.

\begin{proof}[Proof of inequality \eqref{nik-ineq}]
It suffices to establish \eqref{nik-ineq} when in addition to the stated
assumptions $u\in\cal S(\Rn)$, where $\cal S(\Rn)$ is the Schwartz space of
rapidly decreasing $C^{\infty }$-functions. 
To see this we take $\hat\psi\in
C^\infty(\Rn)$ such that $\hat\psi(t)=1$ for $|t|\leq1$,
$\hat\psi(t)=0$ for $|t|\geq2$, while $\psi(0)=1$. Defining
$u_j$ by
\begin{equation}
  u_j(x):= u(x)\psi(\tfrac xj),
\end{equation}
then $u_j$ 
has its spectrum in $\supp\hat u+B(0,\tfrac{2}{j})$, 
and $u_j\in \cal S(\Rn)$ for
$u(x)$ is $\cal O((1+|x|)^N)$ for some $N\geq0$ by the
Paley--Wiener--Schwartz theorem ($\hat u\in \cal E'$).
There is only something to show for $p<r$, so $p<\infty$ and hence
\begin{equation}
  u_j\to u\quad\text{in $\cal S'(\Rn)\cap L_{p}(\Rn)$}.
  \label{uju-lim}
\end{equation}
So if \eqref{nik-ineq} can be proved for all Schwartz functions with
compact spectra, it will follow
that $(u_j)$ is a fundamental sequence in $L_{r}$, hence
converging to some $g\in L_{r}$
with $g(x)=u(x)$ a.e.

Since $\nrm{\cdot}{p}$ is continuous $L_{p}\to\R$, the above
would entail that 
\begin{equation}
  \norm{u}{L_{r}} 
  \leq 
  \lim_{j\to\infty} c(R+\tfrac2j)^{\fracci np-\fracci nr}\norm{u_j}{L_{p}}   
  =
  cR^{\fracci np-\fracci nr}\norm{u}{L_{p}}.
\end{equation}
($c>0$ will be independent of
$u$ and the size of its spectrum.)

In the smooth case we proceed in the spirit of \cite[1.3.2]{T2}.
When $u\in \cal S(\Rn)$ with $\hat u\in C^\infty_{B(0,R)}$, then 
$\Psi(x)=R^n\psi(Rx)$ fulfils
\begin{equation}
  u(x)=\cal F^{-1}(\hat\Psi\hat u)=\int_{\Rn} u(y)\Psi(x-y)\,dy.
  \label{upsi-eq}
\end{equation}
For $1\leq p\leq\infty=r$ and
$p+p^*=pp^*$, the above identity gives
\begin{equation}
  |u(x)|\leq \norm{\Psi}{L_{p^*}}\norm{u}{L_{p}}
        \leq \norm{\psi}{L_{p^*}} 
        R^{\fracci n{p}} \norm{u}{L_{p}},
  \label{upp'-eq}
\end{equation}
hence \eqref{nik-ineq}. In case $0<p<1$ and $r=\infty$,
\begin{equation}
  |u(x)|\leq  \nrm{\Psi}{\infty}
  (\sup|u(z)|)^{1-p} \int |u(y)|^p\,dy,
\end{equation}
so the $x$-independence of the right hand side entails
\begin{equation}
  |u(x)|^p\leq \nrm{u}{\infty}^p\leq \nrm{\Psi}{\infty} \int |u(y)|^p\,dy,
  \label{up1-eq}
\end{equation}
where $\nrm{\Psi}{\infty}^{\fracci1p}=
R^{\fracci np}\nrm{\psi}{\infty}$. This proves \eqref{nik-ineq} for $r=\infty$.

If $r<\infty$ and $p<1$, insertion of \eqref{up1-eq} into $\nrm{u}{r}$ gives
\begin{equation}
\begin{split}
(\int |u(x)|^r\,dx)^{\fracci1r} &\leq
(\sup_z|u(z)|^{r-p})^{\fracci1r}(\int |u(x)|^p\,dx)^{\fracci1r} 
\\  &\leq
 \nrm{\Psi}{\infty}^{\fracci{r-p}{rp}} \nrm{u}{p}
\le
 (\nrm{\psi}{\infty} R^n)^{\fracci{1}{p}-\fracci1r} \nrm{u}{p}.
\end{split}
\end{equation}
For $1\leq p\leq r<\infty$ one can insert
\eqref{upp'-eq} instead.
This completes the proof of \eqref{nik-ineq},
with $c$ equal to a power of a norm of $\psi$.
\end{proof}

\section{The mixed norm case}
  \label{mixd-sect}
One advantage of the above proof method is that it extends to spaces with
mixed norms. In this set-up $L_p(\Rn)$ is replaced by $L_{\vec p}(\Rn)$
where
$\vec p=(p_1,\dots,p_n)$ for $0<p_j\le\infty$, $j=1,\dots,n$, and
\begin{equation}
  \norm{f}{L_{\vec p}}=
  (\int_{\R}(\dots 
  (\int_\R |f(x_1,\dots,x_n)|^{p_1}\,dx_1
     )^{\frac{p_2}{p_1}}
  \dots     )^{\frac{p_n}{p_{n-1}}}\,dx_n)^\fracci1{p_n}.
\end{equation}
It is well known that such spaces frequently enter the analysis of 
evolution problems for partial differential equations.

The corresponding Lizorkin--Triebel spaces $F^{s}_{\vec p,q}(\Rn)$ consist
of the $u$ in $\cal S'(\Rn)$ having finite quasi-norms ($0<p_j<\infty$)
\begin{equation}
  \norm{u}{F^{s}_{\vec p,q}} = 
   \norm{(\sum_{j=0}^\infty 2^{sqj}|u_j(\cdot)|^q)^\fracci1q}{L_{\vec p}}.
  \label{fspq'-eq}
\end{equation}
The purpose is not to go into the general theory of such spaces here.
Instead we want to show that
the Sobolev embeddings follow directly from first principles, namely the
definition \eqref{fspq'-eq} and a mixed
norm version of the Nikol$'$skij--Plancherel--Polya inequality.

Before we turn to this, it is noted that also the mixed quasi-norm in
$L_{\vec p}$ is subadditive when raised to a power 
$\lambda\in\,]0,\min(1,p_1,\dots,p_n)]$,
\begin{equation}
  \norm{f+g}{L_{\vec p}}^{\lambda}
\leq
    \norm{f}{L_{\vec p}}^{\lambda}+  \norm{g}{L_{\vec p}}^{\lambda}.
\label{Lplambda-eq}
\end{equation}
In fact $\int |f+g|^p\,d\mu\leq \int |f|^p\,d\mu+\int|g|^p\,d\mu$ holds
for arbitrary measures if $0<p\leq 1$, so 
$\nrm{f+g}{p}\leq (\nrm{f}{p}^\nu+\nrm{g}{p}^\nu)^{1/\nu}$ 
for  \emph{all} $\nu\in\,]0,\min(1,p)]$. 
Using this, it is easy to see that $\Nrm{ \nrm{\cdot}{q} }{p}^\lambda$
is subadditive for $\lambda\leq\min(1,p,q)$. For
$\lambda=\min(1,p_1,\dots,p_n)$ a repeated use of this yields
\eqref{Lplambda-eq}. 

To prepare for the mixed norm version of the
Nikol$'$skij--Plancherel--Polya inequality, we recall the
Paley--Wiener--Schwartz Theorem in a precise version, cf.~\cite[Ch.~7]{H}:
when $K\Subset \Rn$, a distribution $u\in \cal S'(\Rn)$ fulfils
$\supp\hat u\subset K$ if and only if $u$ extends to a function 
$u(x+\im y)$ on $\C^n$ which is entire analytic and fulfils
\begin{equation}
 |u(x+\im y)|\le C(1+|x+\im y|)^N e^{H(-y)}, \quad x,y\in \Rn 
  \label{ON-eq}
\end{equation}
for some $N\ge0$ (the order of $\hat u$);
here $H(y)=\sup\{\,y\cdot\xi\mid  \xi\in K\,\}$
is the supporting function of the compact set $K$. 

When $u\in \cal S'\cap L_{\vec p}(\Rn)$ and $\supp\hat u\Subset\Rn$, then
$u(x)=\cal O((1+|x|)^N)$  for $x\in \Rn$, so
$u\in L_1^{\loc}(\Rn)$ also if some $p_k<1$; 
and \eqref{ON-eq} also yields $u(\varphi)=\int
u(x)\varphi(x)\,dx$ for $\varphi\in \cal S(\Rn)$. 
(In general elements of $\cal S'\cap L_1^{\loc}$
do not act on $\cal S$ by integration; cf. $e^x\cos(e^x)=\sin(e^x)'$ on $\R$.)

We also note that for a rectangle $[-R_1,R_1]\times\dots\times[R_n,R_n]$,
\begin{equation}
  H(y)= R_1|y_1|+\dots+R_n|y_n|.
\end{equation}
Indeed, $H(y)$ can be estimated by the triangle inequality, and equality is
attained in one of the corners $(\pm R_1,\dots,\pm R_n)$.
When $\hat u$ is supported in this rectangle, and $(x',x'')$, $(y',y'')\in
\R^{n'}\times\R^{n''}$, $n=n'+n''$, it follows when $x''$, $y''$ are kept as
parameters that $U(x'+\im y'):= u((x',x'')+\im(y',y''))$ is analytic on
$\C^{n'}$ and
\begin{equation}
  |U(x'+\im y')|\le C' (1+|(x',y')|)^N \exp(\sum_{n'}R_j|y_j|).
  \label{ON'-eq}
\end{equation}
Therefore $x'\mapsto U(x')$ is a tempered distribution on $\R^{n'}$ with
spectrum in $\prod_{n'}[-R_j,R_j]$, as one would expect.

These facts are convenient for the proof of

\begin{prop}
  \label{mNPP-prop}
Whenever $0<p_j\le r_j\le\infty$ for $j=1,\dots,n$, there is a $c>0$
such that for every $f\in \cal S'(\Rn)\cap L_{\vec p}$ with spectrum
in a compact rectangle given by $|\xi_k|\leq R_k$ for $k=1,\dots,n$,
i.e.\ 
\begin{equation}
  \supp\cal Ff\subset [-R_1,R_1]\times\dots[-R_n,R_n],
\label{R-eq}
\end{equation} 
it also holds that $f\in L_{\vec r}(\Rn)$ and
\begin{equation}
  \norm{f}{L_{\vec r}}\le c (\prod_{k=1,\dots,n}
             R_k^{\fracci1{p_k}-\fracci1{r_k}})\norm{f}{L_{\vec p}}.
  \label{mNPP-ineq}
\end{equation}
\end{prop}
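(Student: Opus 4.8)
The plan is to reduce the mixed-norm inequality to $n$ successive applications of the one-dimensional unmixed inequality \eqref{nik-ineq}, changing one coordinate direction at a time. The key observation, recorded in the paragraph around \eqref{ON'-eq}, is that when $\widehat f$ is supported in the rectangle $\prod_k[-R_k,R_k]$, then for \emph{fixed} values of the remaining variables the slice is (a.e.) the restriction of an entire function of the single complex variable $x_j+\im y_j$ whose spectrum lies in $[-R_j,R_j]$. So each slice is a one-dimensional band-limited function to which \eqref{nik-ineq} applies with $n$ replaced by $1$ and $R$ replaced by $R_j$.

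First I would set up an induction (or a telescoping product) over the coordinates, raising the target exponents $p_k\to r_k$ one index at a time. Concretely, define the intermediate exponent vectors $\vec p=\vec q^{(0)},\vec q^{(1)},\dots,\vec q^{(n)}=\vec r$, where $\vec q^{(m)}$ agrees with $\vec r$ in the first $m$ slots and with $\vec p$ in the last $n-m$ slots. At the $m^{\text{th}}$ step I would work in the innermost iterated integral corresponding to the $x_m$-direction: freezing all the other variables, apply the one-dimensional \eqref{nik-ineq} to the slice $t\mapsto f(\dots,t,\dots)$ to gain the factor $R_m^{1/p_m-1/r_m}$ while passing from an $L_{p_m}$-norm in $x_m$ to an $L_{r_m}$-norm in $x_m$. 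Then the outer integrations (in the variables not yet treated) are left untouched, and one reassembles the mixed norm $\norm{f}{L_{\vec q^{(m)}}}$ from $\norm{f}{L_{\vec q^{(m-1)}}}$. Iterating $k=1,\dots,n$ yields \eqref{mNPP-ineq} with $c$ a product of the $n$ one-dimensional constants.

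The main obstacle, and the point that needs genuine care rather than bookkeeping, is the interchange of the order of integration required to bring the $x_m$-variable to the innermost position before applying the one-dimensional estimate. In the definition of $L_{\vec p}$ the integrations are nested in a fixed order $x_1,\dots,x_n$, so to treat the $m^{\text{th}}$ coordinate by a one-dimensional argument one must justify rearranging these iterated integrals. When $p_m\le r_m$ this is where the Minkowski-type inequality for integrals with inner exponent $r_m/p_m\ge 1$ enters: moving an $L_{r_m}$-norm past an $L_{p_m}$-norm (in a different variable) is governed by the generalized Minkowski inequality, and the hypothesis $p_j\le r_j$ for \emph{every} $j$ is exactly what makes all these exchanges go in the favourable direction. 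I would isolate this as the crucial lemma. A secondary technical point is to confirm at the outset, via \eqref{ON-eq} and the slicing remark at \eqref{ON'-eq}, that the slices are genuinely one-dimensional band-limited functions a.e.\ (so that \eqref{nik-ineq} legitimately applies and the a.e.\ finiteness needed for Fubini/Minkowski is in place); the polynomial bound $u(x)=\cal O((1+|x|)^N)$ guarantees local integrability so that the iterated integrals are well defined even when some $p_k<1$.
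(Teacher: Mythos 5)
Your overall architecture---a telescoping chain of intermediate exponent vectors raising one coordinate at a time, with the slicing remark at \eqref{ON'-eq} guaranteeing that slices are one-dimensional band-limited functions---matches the paper's ``succession of embeddings''. But the execution of the single step has a genuine gap, and it sits exactly where you flag the ``main obstacle''. You propose to bring $x_m$ to the innermost position by the generalized Minkowski inequality, apply the one-dimensional \eqref{nik-ineq} there, and rearrange back. Minkowski's inequality, however, exchanges two adjacent integrations in one direction only: for $0<a\le b\le\infty$,
\begin{equation*}
  \Bigl(\int\bigl(\int |f(x,y)|^{a}\,dy\bigr)^{\frac ba}dx\Bigr)^{\frac1b}
  \le
  \Bigl(\int\bigl(\int |f(x,y)|^{b}\,dx\bigr)^{\frac ab}dy\Bigr)^{\frac1a},
\end{equation*}
i.e.\ an upper bound results only when the \emph{smaller} of the two exponents involved in the swap moves from the inside to the outside. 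The exponents compared are those attached to the two \emph{different} variables being interchanged, not $p_m$ versus $r_m$. So dragging $x_m$ (exponent $r_m$) into the innermost slot past $x_j$, $j<m$ (exponent $r_j$), requires $r_j\le r_m$; and pushing it back out after the one-dimensional estimate (now carrying $p_m$) requires $p_m\le r_j$. Neither comparison follows from the componentwise hypothesis $p_j\le r_j$: for $\vec p=(5,\tfrac12)$, $\vec r=(5,3)$ the step $m=2$ would need $r_1=5\le r_2=3$. Since mixed norms taken in different orders of integration are genuinely incomparable in both directions, this is not bookkeeping that can be fixed; the route fails.

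The paper circumvents the reordering problem entirely by orienting each step so that the exponent being raised sits in the \emph{outermost} slot of the norm actually estimated. In step $1^\circ$, for the embedding that changes only $p_m\to r_m$, it freezes the trailing variables $x''=(x_{m+1},\dots,x_n)$: by \eqref{ON-eq} and \eqref{ON'-eq} the slice $x'\mapsto u(x',x'')$ lies in $\cal S'\cap L_{(p_1,\dots,p_m)}(\R^m)$ with spectrum in $[-R_1,R_1]\times\dots\times[-R_m,R_m]$, and since the mixed norm integrates $x_1,\dots,x_m$ before $x''$, it suffices to prove the case $m=n$ and integrate in $x''$ afterwards. For that case the one-dimensional input is injected not through frozen slices but through a convolution identity in the single last variable, $u(x)=\int u(x',y_n)\Psi(x_n-y_n)\,dy_n$ with $\Psi(x_n)=R_n\psi(R_nx_n)$ (step $2^\circ$); the inner $(n-1)$-dimensional mixed norm passes through this integral by the one-sided Minkowski inequality---legitimate because the inner exponents are \emph{identical} on both sides, so no swap of order ever occurs---and Young's inequality in the scalar variable $x_n$ produces the factor $R_n^{1/p_n-1/r_n}$ (step $3^\circ$). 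When some $p_k<1$, the identity is first raised to a power $s$ with all $p_k/s\ge1$; the pointwise inequality $|u(x)|^s\le C(R_n)\int|u(x',y_n)|^s|\Psi(x_n-y_n)|^s\,dy_n$ is precisely where \eqref{nik-ineq} enters, applied to $y_n\mapsto u(x',y_n)\Psi(x_n-y_n)$, which has spectrum in $[-3R_n,3R_n]$ (step $4^\circ$). So your instinct to reduce to one dimension per coordinate is sound, but the reduction must change the outermost exponent of the sliced norm via a convolution in that variable, rather than pull the active variable inside.
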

This result was established by Unin$'$skij \cite{Un66,Un70} for
exponents $p_k\ge1$; Schmeisser and Triebel \cite[1.6.2]{ScTr87} 
covered the case $n=2$. 
We give a direct proof where the treatment of $0<p_j<1$ is inspired by 
a paper of St{\"o}ckert \cite[Satz~2.1]{Stck78}, who proved
\eqref{mNPP-ineq} for subclasses of $\cal S(\Rn)$ with exponential decay.

The strategy of the proof is perhaps best described as a \emph{succession of
embeddings}, which means that \eqref{mNPP-ineq} is realised as a composition
of the following $n$ embeddings, that each only affect a single coordinate
direction:
\begin{equation}
  L_{\vec p}\imb L_{(r_1,p_2,\dots,p_n)}
  \imb L_{(r_1,r_2,p_3,\dots,p_n)}\imb\dots\imb L_{\vec r}.
\end{equation}
For $r_k=p_k$ the $k^{\op{th}}$ map is just the identity map from the
$k^{\op{th}}$ space to itself. This gives a convenient reduction
to the case in which $\vec r$ and $\vec p$ differ in only one component, say
$r_m>p_m$ whilst $r_j=p_j$ for $j\ne m$. Thereby some technicalities are
circumvented. (Already \eqref{uju-lim} is troublesome to carry over to the
mixed case, for if some $p_k=\infty$ we cannot obtain convergence in the norm
topology.)

\begin{proof}
$1^\circ$. We prove \eqref{mNPP-ineq} for an arbitrary $u\in 
L_{\vec p}(\Rn)$ 
by means of a succession of
embeddings, as explained above. So it suffices to
assume that $r_m>p_m$ only holds for one value of $m$, and we can assume
that this is $m=n$. 
 
Indeed, seeing $u$ as a function of $x'=(x_1,\dots,x_m)$, it was found
in \eqref{ON'-eq} ff.\ that its spectrum lies in
$[-R_1,R_1]\times\dots\times [-R_m,R_m]$; and by \eqref{ON-eq}
it is in $\cal S'\cap L_{(p_1,\dots,p_m)}$ on $\R^{m}$, 
so once the case $m=n$ is covered,
integration with respect to $x''$ yields
\eqref{mNPP-ineq} in general when $1\le m\le n$.

$2^\circ$.
If $\psi\in\cal S(\R)$ is such that $\hat\psi(t)=0$ for $|t|\ge2$ while
$\hat\psi(t)=1$ for $|t|\le1$, we set $\Psi(x_n)=R_n\psi(R_nx_n)$. 
Clearly $\hat u(\xi)=\hat\Psi(\xi_n)\hat u(\xi)$, and we want to
show that
\begin{equation}
  u(x)=\int_{\R} u(x',y_n)\Psi(x_n-y_n)\,dy_n\quad\text{for}\quad x\in \Rn.
  \label{uPsi-eq}
\end{equation}
The right hand side makes sense by \eqref{ON-eq}, and for $\varphi\in
C^\infty_0(\Rn)$ 
\begin{equation}
  \cal F_{x\to\xi}(\int \varphi(x',y_n)\Psi(x_n-y_n)\,dy_n)
  = \hat\Psi(\xi_n)\hat\varphi(\xi).
\end{equation}
From this and the inversion formula $\cal
F^2\varphi=(2\pi)^n\varphi(-\cdot)$ used twice,
\begin{equation}
  \begin{split}
  \dual{u}{\varphi(-\cdot)}&=
  \dual{\hat u}{\hat \varphi}/(2\pi)^n
  =  \dual{\hat u}{\hat\Psi(\xi_n)\hat\varphi(\xi)}/(2\pi)^n
  \\
  &=\dual{u}{\cal F_{\xi\to x} (\hat\Psi(\xi_n)\hat\varphi(\xi))}
     /(2\pi)^n
  \\
  &=\iiint u(x)\Psi(y_n-x_n)\varphi(-x',-y_n)\,dx_ndy_ndx'.
  \end{split}
\end{equation}
The last identity follows from Fubini's theorem and the polynomial
growth. In particular $(x',y_n)\mapsto \int u(x)\Psi(y_n-x_n)\,dx_n
\varphi(-x',-y_n)$ is integrable, and since $u$ itself acts by integration,
this shows \eqref{uPsi-eq} in the set of locally integrable functions.

$3^\circ$. In case $p_k\ge1$ for all $k$ it follows from \eqref{uPsi-eq}
and the generalised Minkowski inequality that
\begin{equation}
  \norm{u(\cdot,x_n)}{L_{p'}(\R^{n-1})}\le
  \int |\Psi(x_n-y_n)|\cdot
   \norm{u(\cdot,y_n)}{L_{p'}}\,dy_n.
\end{equation}
The Hausdorff--Young inequality applies to the convolution in the last
expression, so if $q\ge1$ is defined by
$\fracc1{p_n}+\fracc1q=1+\fracc1{r_n}$,
\begin{equation}
  \norm{u}{L_{\vec r}}\le \norm{\Psi}{L_q(\R)}\norm{u}{L_{\vec p}}
  \le R_n^{1-\fracci1q}\nrm{\psi}{q}\norm{u}{L_{\vec p}}.
\end{equation}
Because $1-\fracc1q=\fracc1{p_n}-\fracc1{r_n}$, this yields
\eqref{mNPP-ineq} for all classical exponents.

$4^\circ$. In general there is some $s\in \,]0,1[\,$ 
such that $\tfrac{p_k}{s}>1$ for
all $k$. With this \eqref{uPsi-eq} can be replaced by 
\begin{equation}
  |u(x)|^s\leq C(R_n)\int_{\R} |u(x',y_n)|^s|\Psi(x_n-y_n)|^s\,dy_n.
  \label{upsi-ineq}
\end{equation}
Indeed, $y_n\mapsto u(x',y_n)\Psi(x_n-y_n)$ has
spectrum in $[-3R_n,3R_n]$  
(since that of $u(x',\cdot)$ is contained in $[-R_n,R_n]$).
Hence \eqref{upsi-ineq} results from the next inequality, that follows 
from $2^\circ$ and the standard Nikol$'$skij--Plancherel--Polya
inequality \eqref{nik-ineq} on $\R$,
\begin{equation}
  |u(x)|\leq \nrm{u(x',\cdot)\Psi(x_n-\cdot)}{1}\leq
  c(3R_n)^{\fracci1s-1}\nrm{u(x',\cdot)\Psi(x_n-\cdot)}{s}.
\end{equation}

Applying the generalised Minkowski inequality to \eqref{upsi-ineq} 
we first obtain
\begin{equation}
    \Norm{|u(\cdot,x_n)|^s}{L_{p'/s}}\le C(R_n)
  \int |\Psi(x_n-y_n)|^s\Norm{|u(\cdot,y_n)|^s}{L_{p'/s}}\,dy_n,
\end{equation}
so by taking $q\ge1$ such that
$1+\tfrac1{r_n/s}=\tfrac1{p_n/s}+\tfrac1{q/s}$,
\begin{equation}
    \Norm{|u|^s}{L_{\vec r/s}}\le C(R_n)
  \norm{|\Psi|^s}{L_{q/s}}\norm{|u|^s}{L_{\vec p/s}}.
\end{equation}
But this means that
\begin{equation}
  \norm{u}{L_{\vec r}}^s\le c'R_n^{1-s}R_n^{s-\fracci sq}\nrm{\psi}{q}^s
  \norm{u}{L_{\vec p}}^s,
\end{equation}
and since $1-\fracc s{q}=s(\fracc1{p_n}-\fracc1{r_n})$ the claim
follows by taking roots.
\end{proof}

Using Lemma~\ref{sss-lem} and ideas from the proof of
Proposition~\ref{sob-prop}, the above theorem can now relatively easily 
be extended to a sequence
version of the Nikol$'$skij--Plancherel--Polya inequality.

This version deals with sequences $(f_j)$ in $\cal S'(\Rn)$ 
that fulfill the following spectral condition, that we could describe as a
\emph{geometric rectangle condition},
\begin{equation}
  \supp \cal F f_j \subset [-AR_1^j,AR_1^j]\times\dots\times[-AR_n^j,AR_n^j].
  \label{vNPP-cnd}
\end{equation}
Here $A>0$ is a constant, while the fixed numbers $R_1$,\dots,$R_n>1$
define the rectangles.

The next inequality will give the Sobolev embeddings in
Theorem~\ref{sob'-thm} below at once, 
but the inequality is also interesting for other purposes;
it was obtained in \cite[Prop.~2.4.1]{ScTr87} for $n=2$.

\begin{thm}
  \label{vNPP-thm}
When $\vec p\ne\vec r$ and 
$0<p_k\le r_k<\infty$ for $k=1,\dots,n$, then there is
for $0<q\le\infty$ a number $c>0$ such that
\begin{equation}
  \norm{(\sum_{j=0}^\infty |f_j(\cdot)|^q)^\fracci1q}{L_{\vec r}}
  \le c
  \Norm{\sup_{j\in \N_0} (\prod_{k=1}^nR_k^{\fracci j{p_k}-\fracci j{r_k}}
          |f_j(\cdot)|)}{L_{\vec p}}
  \label{vNPP-ineq}
\end{equation}
for all sequences $(f_j)$ in $\cal S'(\Rn)$ fulfilling \eqref{vNPP-cnd}.
\end{thm}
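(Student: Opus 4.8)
The plan is to reduce the sequence inequality \eqref{vNPP-ineq} to a pointwise estimate in $x$, handled by Lemma~\ref{sss-lem}, after first converting the $L_{\vec r}$-norm on each $f_j$ into an $L_{\vec p}$-norm via the mixed Nikol$'$skij--Plancherel--Polya inequality from Proposition~\ref{mNPP-prop}. Since $\vec p\ne\vec r$ with $p_k\le r_k$, we have $\sum_k(\fracp[k]-\fracc1{r_k})>0$, so there is a genuine gain of smoothness to exploit. The key is to introduce two auxiliary exponents playing the roles of $s_0$ and $s_1$ in Lemma~\ref{sss-lem}, where now the weight $2^{sj}$ is replaced by the product $\prod_k R_k^{j/p_k-j/r_k}$; writing $\rho:=\prod_k R_k^{1/p_k-1/r_k}>1$ and setting $a_j(x):=f_j(x)$, the left side of \eqref{vNPP-ineq} at fixed $x$ is $\norm{a_j(x)}{\ell^0_q}$, which I would bound by interpolating between an $\ell^{s_1}_\infty$-norm carrying negative smoothness and the $\ell^{s_0}_\infty$-norm tied to $\rho$.

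First I would fix $x$ and apply Lemma~\ref{sss-lem} with a suitable $\theta\in\,]0,1[\,$, choosing $s_0>0>s_1$ (in terms of $\log\rho$) so that $\theta s_0+(1-\theta)s_1=0$, which matches the unweighted $q$-sum on the left of \eqref{vNPP-ineq}. This yields, for each $x$,
\begin{equation}
  (\sum_j |f_j(x)|^q)^\fracci1q\le c\,
  \bigl(\sup_j \rho^{j}|f_j(x)|\bigr)^{1-\theta}
  \bigl(\sup_j \rho^{-\kappa j}|f_j(x)|\bigr)^{\theta}
\end{equation}
for appropriate positive constants, the first supremum being exactly the expression whose $L_{\vec p}$-norm appears on the right of \eqref{vNPP-ineq}. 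The second factor must be controlled by the same right-hand side, and this is where Proposition~\ref{mNPP-prop} enters: applying \eqref{mNPP-ineq} to each $f_j$, whose spectrum lies in the rectangle with half-widths $AR_k^j$ by \eqref{vNPP-cnd}, converts an $L_{\vec r}$ (or $L_\infty$-type) bound on $f_j$ into an $L_{\vec p}$ bound times the gain $\prod_k(AR_k^j)^{1/p_k-1/r_k}$. The plan is to set up the two auxiliary smoothness parameters precisely so that, after taking the $L_{\vec r}$-norm of the displayed product and using Proposition~\ref{mNPP-prop} on the factor carrying $\rho^{-\kappa j}$, the powers of $\rho$ cancel and the $\sup_j$ survives intact.

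The mechanism for taking the $L_{\vec r}$-norm through the product of the two factors is a Hölder inequality in mixed norms with exponents adapted to $\theta$: raising each factor to its exponent and choosing the splitting of $\vec r$ so that the first factor lands in $L_{\vec p}$ while the second lands in a space to which Proposition~\ref{mNPP-prop} applies termwise. Concretely, I would take $L_{\vec r}$-norms of the pointwise estimate, apply the mixed-norm Hölder inequality to distribute the norm onto the $(1-\theta)$- and $\theta$-powers, bound the $\theta$-power factor by summing the Proposition~\ref{mNPP-prop} estimates over $j$ (a convergent geometric series in $\rho$, using $\rho>1$ and the negative weight $\rho^{-\kappa j}$ to force summability), and recognise the $(1-\theta)$-power factor as the right-hand side of \eqref{vNPP-ineq}. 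The constant $A$ contributes only a fixed power of $\max_k R_k$ that is absorbed into $c$.

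The main obstacle I anticipate is the bookkeeping that makes the two applications of the $\rho$-weights cancel exactly after the mixed Hölder step, together with verifying that the $\theta$-factor's $\ell^\infty_{s_1}$-norm can indeed be dominated by a \emph{summable} series of Proposition~\ref{mNPP-prop} estimates rather than merely a supremum --- this requires turning the supremum over $j$ into an $\ell^1$-sum at the cost of the geometric decay, which is available precisely because $s_1<0$ gives $\rho^{-\kappa j}$ decaying in $j$. Care is also needed because Hölder in mixed norms splits each coordinate exponent separately, so the exponent conditions $\fracp[k]-\fracc1{r_k}\ge0$ must be checked coordinate-by-coordinate to guarantee that the intermediate spaces are well-defined and that Proposition~\ref{mNPP-prop} is applicable in each direction simultaneously.
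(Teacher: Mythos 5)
Your ingredients (Lemma~\ref{sss-lem} plus Proposition~\ref{mNPP-prop}) are the right ones, but the plan fails at its decisive quantitative step: the ``convergent geometric series in $\rho$'' has ratio exactly $1$. Work the exponents out. Lemma~\ref{sss-lem} is an exact convex interpolation, so your weights and powers must satisfy $(1-\theta)\cdot 1+\theta\cdot(-\kappa)=0$, i.e.\ $\kappa=(1-\theta)/\theta$. After the mixed H\"older step that puts the first factor into $L_{\vec p}$ (so $u_k=p_k/(1-\theta)$), the $\theta$-power factor must be measured in $L_{\vec w}$ with $\fracc1{w_k}=\fracc1{\theta}\bigl(\fracc1{r_k}-\fracc{1-\theta}{p_k}\bigr)$, whence Proposition~\ref{mNPP-prop} applied to $f_j$ gives the gain $\prod_k R_k^{j(\fracci1{p_k}-\fracci1{w_k})}=\rho^{j/\theta}$, while bounding $\norm{f_j}{L_{\vec p}}$ by the right-hand side $M$ of \eqref{vNPP-ineq} costs $\rho^{-j}$. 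The $j$-th term of your sum is therefore comparable to
\begin{equation}
  \rho^{-\kappa j}\,\rho^{j/\theta}\,\rho^{-j}\,M
  =\rho^{j(-\fracci{1-\theta}{\theta}+\fracci1{\theta}-1)}M=M,
\end{equation}
and the series diverges. This is not a bookkeeping accident: \eqref{vNPP-ineq} is an endpoint (Sobolev-critical) inequality and the lemma leaves no slack, so any step trading the $\sup_j$ for an $\ell^1$-sum over $j$ is doomed. Note also that without the H\"older detour your pointwise-in-$x$ scheme cannot even start in the mixed setting: the exact-match mechanism $\theta p=p_0$ from the proof of Proposition~\ref{sob-prop} would here require $(1-\theta)r_k=p_k$ for all $k$ simultaneously, i.e.\ a common ratio $p_k/r_k$, which is exactly what mixed norms deny you.

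The paper's proof avoids summation over $j$ altogether, and this is where it genuinely differs from your plan. It first reduces, by a succession of embeddings, to the case where $\vec r$ and $\vec p$ differ only in the last coordinate; then, after lowering $q$ below $\min(r_1,\dots,r_n)$, it applies Minkowski's inequality to pull the $L_{p'}(\R^{n-1})$-norm inside the $q$-sum, and applies Lemma~\ref{sss-lem} \emph{not} pointwise in $x\in\Rn$ but to the scalar sequence $a_j=\norm{f_j(\cdot,x_n)}{L_{p'}}$, with base $R_n$, $\theta=p_n/r_n$, $s_0=\fracc1{p_n}-\fracc1{r_n}$, $s_1=-\fracc1{r_n}$. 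The crucial structural point is that the resulting ``bad'' factor is $(\sup_{j,x_n}R_n^{-j/r_n}\norm{f_j(\cdot,x_n)}{L_{p'}})^{1-\theta}$, i.e.\ an $L_{(p',\infty)}$-type quantity constant in $x_n$, so Proposition~\ref{mNPP-prop} (with target exponent $\infty$ in the last coordinate) can be applied termwise \emph{inside} the supremum over $j$; the supremum survives intact, yielding $\sup_j R_n^{s_0j}\norm{f_j}{L_{\vec p}}\le\Norm{\sup_j R_n^{s_0j}|f_j|}{L_{\vec p}}$, and the ``good'' factor matches exactly because $\theta r_n=p_n$. In your scheme the analogous factor sits under a \emph{finite} mixed norm, where $\Norm{\sup_j|f_j|}{L_{\vec w}}$ cannot be estimated termwise — which is precisely what forced you into the divergent sum. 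To repair your argument, adopt the one-coordinate-at-a-time reduction and arrange the bad factor as an $L_\infty$-bound in the distinguished variable; that is, in effect, the paper's proof.
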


\begin{proof}
Using a succession of embeddings as in the proof of 
Proposition~\ref{mNPP-prop}, 
it is enough to cover the case in which
$r_m\ne p_m$ only holds for $m=n$.

Furthermore it can be assumed that $q<\min(r_1,\dots,r_n)$.
Then since $r_n<\infty $, it is a consequence of Minkowski's inequality 
that the left hand side of \eqref{vNPP-ineq} is less than 
\begin{equation}
  (\int_\R (\sum_{j=0}^\infty 
   \norm{f_j(\cdot,x_n)}{L_{p'}(\R^{n-1})}^q
   )^{\fracci{r_n}q} \, dx_n)^{\fracci1{r_n}}.
\end{equation}
To proceed we note that Lemma~\ref{sss-lem} also holds if the base of the
exponential is shifted from $2$ to $R_n>1$ (since $R_n$ is a power of $2$).
We use this version below with
$\theta:=\frac{p_n}{r_n}\in \,]0,1[\,$, and  setting
$s_0=\fracc{1}{p_n}-\fracc1{r_n}$, $s_1=-\fracc1{r_n}$,
\begin{equation}
  \theta s_0+(1-\theta)s_1= \fracc{\theta}{p_n}-\fracc1{r_n}=0.
\end{equation}
Applying this to the $\ell^0_q$-norm, 
the above integral is majorised by 
\begin{equation}
  c (\int_\R (\sup_{j} R_n^{s_0j}\norm{f_j(\cdot,x_n)}{L_{p'}})^{\theta r_n}
             \,dx_n)^{\fracci1{r_n}}
  (\sup_{j,x_n}R_n^{-\fracci j{r_n}}\norm{f_j(\cdot,x_n)}{L_{p'}})^{1-\theta}.
\end{equation}
Using Proposition~\ref{mNPP-prop} on the $L_{(r',\infty)}$-norm, 
one has for the last factor
\begin{equation}
  (\dots)^{1-\theta}\leq
  c'(\sup_j R_n^{\fracci j{p_n}-\fracci j{r_n}}
  \Norm{f_j}{L_{\vec p}})^{1-\theta}
\leq
  c'\Norm{\sup_j R_n^{\fracci j{p_n}-\fracci j{r_n}}|f_j|}
         {L_{\vec p}}^{1-\theta}.
\end{equation}
The first factor $(\dots)^{\theta/p_n}$ can be treated similarly, and
it follows that \eqref{vNPP-ineq} holds.
\end{proof}

Using Theorem~\ref{vNPP-thm} we arrive at the following Sobolev embedding.

\begin{thm}
  \label{sob'-thm}
Let $s$, $t\in \R$, $s>t$, and $p_j$, $r_j\in \,]0,\infty[$ fulfil
\begin{equation}
  s-\fracc1{p_1}-\dots-\fracc1{p_n}=t-\fracc1{r_1}-\dots-\fracc1{r_n},\qquad
  \forall k\colon r_k\ge p_k.
  \label{stpr-ineq}
\end{equation}
There is then a continuous embedding
$F^{s}_{\vec p,\infty}(\Rn)\imb F^{t}_{\vec r,q}(\Rn)$ for every $q\in
\,]0,\infty]$. 
\end{thm}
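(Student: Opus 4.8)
The plan is to obtain Theorem~\ref{sob'-thm} as an immediate consequence of the sequence inequality \eqref{vNPP-ineq}, in complete analogy with the way Proposition~\ref{sob-prop} was derived in the unmixed case from \eqref{nik-ineq} and \eqref{sss-ineq}. All of the genuine analysis has already been carried out in Theorem~\ref{vNPP-thm}; what remains is only to feed in the correct sequence and read off the exponents from the dimension balance \eqref{stpr-ineq}.

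Concretely, for $u\in F^{s}_{\vec p,\infty}(\Rn)$ I would set $f_j:=2^{tj}u_j$, where $u_j=\cal F^{-1}(\Phi_j\hat u)$ are the isotropic Littlewood--Paley pieces entering the definition \eqref{fspq'-eq}. These are band-limited, hence tempered, so $(f_j)$ is a sequence in $\cal S'(\Rn)$. Moreover $\supp\cal F f_j=\supp\Phi_j\hat u\subset B(0,2^{j+1})\subset[-2^{j+1},2^{j+1}]^n$, so the geometric rectangle condition \eqref{vNPP-cnd} holds with $A=2$ and $R_k=2>1$ for every $k$. Thus Theorem~\ref{vNPP-thm} is applicable to $(f_j)$.

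Next I would identify the two sides of \eqref{vNPP-ineq} for this choice. On the left, $\norm{(\sum_j|f_j|^q)^{1/q}}{L_{\vec r}}$ is exactly $\norm{u}{F^{t}_{\vec r,q}}$ by \eqref{fspq'-eq}. On the right, the weight collapses to a single power of $2$: with $R_k=2$ one has $\prod_{k=1}^nR_k^{j/p_k-j/r_k}=2^{j(\sum_k1/p_k-\sum_k1/r_k)}$, and the identity in \eqref{stpr-ineq} rearranges to $\sum_k1/p_k-\sum_k1/r_k=s-t$. Hence the weight equals $2^{j(s-t)}$, so that $2^{j(s-t)}|f_j|=2^{sj}|u_j|$ and the right-hand side becomes $c\,\norm{\sup_j2^{sj}|u_j|}{L_{\vec p}}=c\,\norm{u}{F^{s}_{\vec p,\infty}}$. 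This is precisely the asserted continuous embedding.

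It then remains only to verify the hypotheses of Theorem~\ref{vNPP-thm} and to dispose of the edge cases. The requirement $0<p_k\le r_k<\infty$ is granted directly by \eqref{stpr-ineq}, and the non-degeneracy $\vec p\ne\vec r$ is automatic, since $\vec p=\vec r$ would force $s=t$ through \eqref{stpr-ineq}, contradicting $s>t$. The value $q=\infty$ needs no separate treatment, because Theorem~\ref{vNPP-thm} is stated for all $q\in\,]0,\infty]$ and the computation above is unchanged when the $\ell^q$-sum is replaced by a supremum. I do not expect any real obstacle at this final stage: the entire difficulty has been pushed into Theorem~\ref{vNPP-thm} (and through it into Proposition~\ref{mNPP-prop} and Lemma~\ref{sss-lem}), so the only points demanding care are the bookkeeping of the exponents via \eqref{stpr-ineq} and the (trivial) check that the fixed bases $R_k=2$ satisfy $R_k>1$.
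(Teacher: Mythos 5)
Your proposal is correct and follows essentially the same route as the paper's own proof: insert $f_j:=2^{tj}u_j$ into \eqref{vNPP-ineq}, noting that the Littlewood--Paley pieces satisfy the geometric rectangle condition \eqref{vNPP-cnd} with $R_k=2$ and a suitable $A$, and that \eqref{stpr-ineq} turns the weight $\prod_k R_k^{j/p_k-j/r_k}$ into $2^{(s-t)j}$, so the two sides of \eqref{vNPP-ineq} become the $F^{t}_{\vec r,q}$- and $F^{s}_{\vec p,\infty}$-quasi-norms. Your extra verifications (that $\vec p\ne\vec r$ is forced by $s>t$, and that $q=\infty$ is already covered by Theorem~\ref{vNPP-thm}) merely make explicit what the paper leaves implicit.
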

\begin{proof}
Since the ball $B(0,A\cdot2^j)$ is contained in the rectangle in
\eqref{vNPP-cnd} with $R_k=2$
for all $k$,
it suffices to take a Littlewood--Paley decomposition of an arbitrary 
$u$ in $F^s_{\vec p,\infty}(\Rn)$ 
and then insert  $f_j:=2^{tj}u_j$ into \eqref{vNPP-ineq}.
\end{proof}

In the following section, the above embedding will be extended to a set-up
with an additional anisotropy in $s$. 

\section{Spaces with mixed norms and quasi-homogeneous smoothness.}
  \label{fspqa-sect}
As is well known, it is important, say for parabolic differential equations
to consider spaces that are anisotropic in the quasi-homogeneous sense 
concerning the smoothness index $s$. This may be combined with the
mixed Lebesgue norms in the way we now describe briefly.

Each coordinate $x_j$ in $\Rn$ is given a weight $a_j\ge1$, and $\vec
a=(a_1,\dots,a_n)$; i.e.\ $\vec a=(1,\dots,1)$ is the case previously treated
in this paper. With the anisotropic dilation 
$t^{\vec a}x:=(t^{a_1}x_1,\dots,t^{a_n}x_n)$ for $t\ge0$, and 
$t^{s\vec a}x:=(t^s)^{\vec a}x$ for $s\in \R$, and in particular 
$t^{-\vec a}x=(t^{-1})^{\vec a}x$, the anisotropic distance
function $|x|_{\vec a}$ is introduced as the unique $t>0$ such
that $t^{-\vec a}x\in S^{n-1}$ (for $x\ne 0$; $|0|_{\vec a}=0$); i.e.\
\begin{equation}
  \tfrac{x_1^2}{t^{2a_1}}+\dots+  \tfrac{x_n^2}{t^{2a_n}}=1.
  \label{tax-eq}
\end{equation}
For the reader's convenience 
we recall that $|\cdot|_{\vec a}$ 
is $C^\infty$ on $\Rn\setminus\{0\}$ by the Implicit
Function Theorem; the formula $|t^{\vec a}x|_{\vec a}=t|x|_{\vec a}$ is seen
directly, and this implies the triangle inequality:
\begin{equation}
  |x+y|_{\vec a}\le |x|_{\vec a}+|y|_{\vec a}.
  \label{atriangle-ineq}
\end{equation} 
Indeed, it should be shown for $x\ne0\ne y$ 
that $1\ge \sum_{j=1}^n \tfrac{(x_j+y_j)^2}
{(|x|_{\vec a}+|y|_{\vec a})^{2a_j}}$, and since each fraction is invariant
under $(x,y)\mapsto (t^{\vec a}x,t^{\vec a}y)$, it can be
assumed that $|x|_{\vec a}+|y|_{\vec a}=1$. Then 
$0<|x|\le |x|_{\vec a}<1$, for 
\begin{equation}
  1= \sum_{j=1}^n{x_j^2}{|x|_{\vec a}^{-2a_j}}
  \ge \sum_{j=1}^n{x_j^2}{|x|_{\vec a}^{-2}}  =|x|^2|x|_{\vec a}^{-2}.
\end{equation} 
With similar results for $y$, we get
$|x+y|\le |x|+|y|\le |x|_{\vec a}+|y|_{\vec a}\le1$,
whence the inequality after \eqref{atriangle-ineq}, as desired.

As a preparation, we need the following analogues of the inequalities
between the $\ell_\infty$-, $\ell_2$- and $\ell_1$-norms on $\C^n$:
\begin{equation}
  \max(|x_1|^{1/a_1},\dots,|x_n|^{1/a_n})
  \le |x|_{\vec a}\le |x_1|^{1/a_1}+\dots+|x_n|^{1/a_n}.
  \label{ax-ineq}
\end{equation}
The inequality to the right follows from \eqref{atriangle-ineq}, 
since $|(x_1,0,\dots)|_{\vec a}=|x_1|^{1/a_1}$ etc.
By taking $t$ equal to the above maximum,
the left hand side of \eqref{tax-eq} would be $\ge1$ (the maximum is attained),
so that $t\le|\xi|_{\vec a}$.

Along with $|\cdot|_{\vec a}$,
the Littlewood--Paley decomposition is chosen 
with the modification that $\Psi_j(\xi)=\psi(2^{-j}|\xi|_{\vec a})$. Then
$\Phi_j$ is supported in the anisotropic corona
$2^{j-1}\le|\xi|_{\vec a}\le 2^{j+1}$. As usual $u_j:=\cal F^{-1}(\Phi_j\hat
u)$, but here it is understood that it is the anisotropic distance function
$|\cdot|_{\vec a}$ that goes into the construction.
 
Using this, the anisotropic Lizorkin--Triebel space
$F^{s,\vec a}_{\vec p,q}(\Rn)$, with $s\in \R$, $\vec p\in \,]0,\infty[\,^n$
and $0<q\le\infty$, consists
of the $u$ in $\cal S'(\Rn)$ having finite quasi-norms 
\begin{equation}
  \norm{u}{F^{s,\vec a}_{\vec p,q}} = 
   \norm{(\sum_{j=0}^\infty 2^{sqj}|u_j(\cdot)|^q)^\fracci1q}{L_{\vec p}}.
  \label{fspq''-eq}
\end{equation}
The corresponding Besov  space $B^{s,\vec a}_{\vec p,\infty}(\Rn)$, 
now with $0<p_k\le\infty$ for all $k$, is given by the quasi-norm
\begin{equation}
  \norm{u}{B^{s,\vec a}_{\vec p,q}} = 
   (\sum_{j=0}^\infty 2^{sqj}\norm{u_j}{L_{\vec p}}^q)^\fracci1q.
  \label{bspq''-eq}
\end{equation}
These are more precisely anisotropic spaces of the quasi-homogeneous and
mixed norm type. In the case of usual Lebesgue norms, i.e.\
$p_1=\dots=p_n=p$, such spaces have been used in general investigations, 
for example of microlocal
properties of non-linear differential equations in
\cite{Y3} and of pointwise multiplication in \cite{JJ94mlt}. 
For $1 < p_k < \infty$, $k=1, \ldots\, , n$ and positive $s$, 
anisotropic Triebel-Lizorkin spaces based on mixed $L_p$-norms 
have been investigated in the second edition of the famous book of Besov,
Il'in and Nikol$'$skij \cite{BIN96}. There they are introduced by means of
differences. Cf.\ Remark~10 below.

\bigskip

Let us first note that
$\norm{\cdot}{F^{s,\vec a}_{\vec p,q}}$ and
$\norm{\cdot}{B^{s,\vec a}_{\vec p,q}}$ are quasi-norms. Indeed,
\eqref{Lplambda-eq} implies that for $\lambda=\min(1,q,p_1,\dots,p_n)$
their powers $\nrm{\cdot}{}^{\lambda}$ are subadditive ; i.e.\ an analogue of
\eqref{Lplambda-eq} holds for them. 
So by taking $\ell_{1/\lambda}$ and 
$\ell_{(1/\lambda)^*}$-norms, $(\tfrac{1}{\lambda})^*=\tfrac{1}{1-\lambda}$,
the quasi-triangle inequality results:
\begin{equation}
  \nrm{f+g}{}\le (\nrm{f}{}^{\lambda}+\nrm{g}{}^{\lambda})^{1/\lambda}
  \le 2^{1-\lambda}(\nrm{f}{}+\nrm{g}{}).
\end{equation}
When both $q\ge1$ and all $p_k\ge1$, the spaces are therefore  
Banach spaces (in view of the completeness shown in
Proposition~9 below).  

As usual the simple embeddings $F^{s,\vec a}_{\vec p,q}\hookrightarrow
F^{s-\varepsilon,\vec a}_{\vec p,q}$ for $\varepsilon>0$, respectively
$F^{s,\vec a}_{\vec p,q}\hookrightarrow F^{s,\vec a}_{\vec p,r}$ for $q<r$
are easy to see. Similarly for $B^{s,\vec a}_{\vec p,q}$.
A direct argument and the generalised Minkowski inequality gives that
\begin{equation}
  B^{s,\vec a}_{\vec p,\min(p_1,\dots,p_n)}
\hookrightarrow   F^{s,\vec a}_{\vec p,q}
\hookrightarrow
  B^{s,\vec a}_{\vec p,\max(p_1,\dots,p_n)}.
  \label{BFB-emb}
\end{equation}
Next we show that the Sobolev embedding for these spaces
is a direct consequence of the previous inequality for distributions
fulfilling the geometric rectangle condition \eqref{vNPP-cnd}.

\begin{thm}
  \label{sob''-thm}
Let $s$, $t\in \R$, $s>t$, and $p_k$, $r_k\in \,]0,\infty]$ fulfil
\begin{equation}
  s-\fracc{a_1}{p_1}-\dots-\fracc{a_n}{p_n}=
  t-\fracc{a_1}{r_1}-\dots-\fracc{a_1}{r_n},\qquad
  \forall k\colon r_k\ge p_k.
  \label{stpra-ineq}
\end{equation}
For any $q\in\,]0,\infty]$ there are then continuous embeddings
$F^{s,\vec a}_{\vec p,\infty}\imb F^{t,\vec a}_{\vec r,q}$ 
(all $r_k<\infty$)
and $B^{s,\vec a}_{\vec p,q}\imb B^{t,\vec a}_{\vec r,q}$ between these spaces
over $\Rn$. 
\end{thm}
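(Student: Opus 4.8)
The plan is to deduce both embeddings directly from the two Nikol$'$skij--Plancherel--Polya results already established, by converting the anisotropic corona condition satisfied by the building blocks $u_j$ into the geometric rectangle condition \eqref{vNPP-cnd}. The only genuinely anisotropic ingredient is the choice of the rectangle rates, which must be tuned to the weight $\vec a$; once this is done the argument is a near-verbatim repetition of the proofs of Theorem~\ref{vNPP-thm} and Proposition~\ref{mNPP-prop}, read through the Sobolev relation \eqref{stpra-ineq}.

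First I would record the spectral geometry. Since $\Phi_j$ is supported where $2^{j-1}\le|\xi|_{\vec a}\le 2^{j+1}$, the left inequality in \eqref{ax-ineq} gives $|\xi_k|^{\fracci1{a_k}}\le|\xi|_{\vec a}\le 2^{j+1}$, hence $|\xi_k|\le 2^{(j+1)a_k}$ for each $k$. Thus $\supp\cal Fu_j$ lies in the rectangle with half-sides $2^{a_k}(2^{a_k})^j$, so the geometric rectangle condition \eqref{vNPP-cnd} holds with $R_k:=2^{a_k}>1$ (as $a_k\ge1$) and $A:=2^{\max_k a_k}$. The crucial point is that with these rates the Sobolev scaling \eqref{stpra-ineq} collapses the weight product: $\prod_{k}R_k^{j(\fracci1{p_k}-\fracci1{r_k})}=2^{j\sum_k a_k(\fracci1{p_k}-\fracci1{r_k})}=2^{j(s-t)}$.

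For the Lizorkin--Triebel embedding (all $r_k<\infty$), I would then set $f_j:=2^{tj}u_j$ and insert it into \eqref{vNPP-ineq}. Here $\vec p\ne\vec r$ is automatic: since $s>t$ the sum $\sum_k a_k(\fracci1{p_k}-\fracci1{r_k})=s-t$ is positive, forcing $r_k>p_k$ for at least one $k$, so Theorem~\ref{vNPP-thm} applies. The left side of \eqref{vNPP-ineq} equals $\norm{u}{F^{t,\vec a}_{\vec r,q}}$ by \eqref{fspq''-eq}, while on the right the factor $2^{j(s-t)}$ combines with $|f_j|=2^{tj}|u_j|$ to give $\sup_j 2^{sj}|u_j|$, whose $L_{\vec p}$-norm is $\norm{u}{F^{s,\vec a}_{\vec p,\infty}}$; this yields $F^{s,\vec a}_{\vec p,\infty}\imb F^{t,\vec a}_{\vec r,q}$. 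For the Besov embedding (now allowing $r_k=\infty$) I would instead apply Proposition~\ref{mNPP-prop} to each $u_j$: by \eqref{mNPP-ineq} and the computation above, $\norm{u_j}{L_{\vec r}}\le c\,2^{j(s-t)}\norm{u_j}{L_{\vec p}}$, the constant absorbing the fixed power $A^{\sum_k(\fracci1{p_k}-\fracci1{r_k})}$; multiplying by $2^{tj}$ and taking $\ell_q$-norms in $j$ gives $\norm{u}{B^{t,\vec a}_{\vec r,q}}\le c\,\norm{u}{B^{s,\vec a}_{\vec p,q}}$ by \eqref{bspq''-eq}. I do not expect a serious obstacle, since the heavy lifting is already in Theorem~\ref{vNPP-thm} and Proposition~\ref{mNPP-prop}; the only thing to watch is that the base $R_k=2^{a_k}$ of the geometric rates is admissible in the sequence inequality — which requires merely $R_k>1$, as arranged, and the rescaling of Lemma~\ref{sss-lem} used there is valid for any such base.
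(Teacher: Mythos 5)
Your proposal is correct and takes essentially the same route as the paper's proof: reading the anisotropic corona condition through the left inequality in \eqref{ax-ineq} to get the geometric rectangle condition \eqref{vNPP-cnd} with $R_k=2^{a_k}$ and $A=2^{\max(a_1,\dots,a_n)}$, then applying Theorem~\ref{vNPP-thm} to $f_j:=2^{tj}u_j$ for the $F$-case and \eqref{mNPP-ineq} termwise for the $B$-case. Your extra verifications---that \eqref{stpra-ineq} collapses the weight product $\prod_k R_k^{j(\fracci1{p_k}-\fracci1{r_k})}$ to $2^{j(s-t)}$, and that $s>t$ forces $\vec p\ne\vec r$ so Theorem~\ref{vNPP-thm} is applicable---are details the paper leaves implicit, and they are correct.
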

\begin{proof}
Since $\supp \Phi_j$ is contained in the set where $|\xi|_{\vec a}\le
2^{j+1}$, the left inequality in \eqref{ax-ineq} gives for all 
$k=1,\dots,n$ that $|\xi_k|\le 2^{a_k+ja_k}$.
Hence $\supp \Phi_j$ is a subset of the rectangle in
\eqref{vNPP-cnd} for $R_k=2^{a_k}$ and $A=2^{\max(a_1,\dots,a_n)}$.

Applying Theorem~\ref{vNPP-thm} to $f_j:=2^{tj}u_j$ for an arbitrary
$u$ in $F^{s,\vec a}_{\vec p,\infty}(\Rn)$ 
therefore yields that $\norm{u}{F^{t,\vec a}_{\vec r,q}}\le 
c\norm{u}{F^{s,\vec a}_{\vec p,\infty}}$ under the assumption in
\eqref{stpra-ineq}. The $B$-case follows from \eqref{mNPP-ineq}.
\end{proof}

\begin{rem}
In \cite[Thm.~29.10]{BIN96} embeddings of $F$-spaces into $L_{\vec{r}}$ and into Besov spaces are studied.
Our Theorem \ref{sob''-thm} supplements and improves (at least partly) the assertions stated there.
\end{rem}

Finally, as a supplement to the existing literature, we add
the next result. It is another
application of the mixed-norm
version of the Nikol$'$skij--Plancherel--Polya inequality, cf.
Proposition~\ref{mNPP-prop} above. 

\begin{prop}   \label{qB-prop}
For $s\in \R$, $0<p_k<\infty$ ($1\le k\le n$) and $0<q\le\infty$ the
Lizorkin--Triebel space $F^{s,\vec a}_{\vec p,q}(\Rn)$ is a quasi-Banach
space with continuous embeddings
\begin{equation}
  \cal S(\Rn)\hookrightarrow F^{s,\vec a}_{\vec p,q}(\Rn)
  \hookrightarrow\cal S'(\Rn).
  \label{SFS-emb}
\end{equation}
Analogous results hold for the $B^{s,\vec a}_{\vec p,q}$ spaces
with $p_k\in \,]0,\infty]$ for all $k$. 
\end{prop}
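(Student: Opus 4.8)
The plan is to establish the three assertions for $F^{s,\vec a}_{\vec p,q}$ in turn — the embedding $\cal S(\Rn)\imb F^{s,\vec a}_{\vec p,q}$, the embedding $F^{s,\vec a}_{\vec p,q}\imb\cal S'(\Rn)$, and completeness — each resting on the mixed-norm Nikol$'$skij--Plancherel--Polya inequality of Proposition~\ref{mNPP-prop} together with the support information in \eqref{ax-ineq}. Throughout I use the elementary but decisive observation that, pointwise, $2^{sj}|u_j(x)|\le(\sum_{k}2^{sqk}|u_k(x)|^q)^{\fracci1q}$, whence monotonicity of the mixed norm gives $2^{sj}\nrm{u_j}{L_{\vec p}}\le\norm{u}{F^{s,\vec a}_{\vec p,q}}$ for every $j$.

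For $\cal S\imb F$, given $\varphi\in\cal S(\Rn)$ I would estimate $\varphi_j=\cal F^{-1}(\Phi_j\hat\varphi)$. Since $\supp\Phi_j$ lies in the corona $2^{j-1}\le|\xi|_{\vec a}\le2^{j+1}$, the right-hand inequality in \eqref{ax-ineq} forces $|\xi|\ge c\,2^{j}$ there for $j\ge1$ (the term $j=0$ is a single harmless summand); hence $\hat\varphi$ and all its derivatives are $\cal O(2^{-jN})$ on $\supp\Phi_j$ for every $N$. Estimating $x^\alpha\varphi_j=\cal F^{-1}(D^\alpha(\Phi_j\hat\varphi))$ in $L_1$, and using that the corona has measure $\cal O(2^{j(a_1+\dots+a_n)})$ while the derivatives of $\Phi_j$ stay bounded in $j$, then yields $|\varphi_j(x)|\le C_{N,M}\,2^{-jN}(1+|x|)^{-M}$ for all $N,M$, with $C_{N,M}$ a finite sum of Schwartz seminorms of $\varphi$. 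Inserting this into \eqref{fspq''-eq} and summing the geometric series in $j$ (the factor $2^{sj}$ being absorbed into $2^{-jN}$) bounds $\norm{\varphi}{F^{s,\vec a}_{\vec p,q}}$ by finitely many seminorms of $\varphi$, as required.

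For $F\imb\cal S'$, note first that $\sum_{j=0}^N\Phi_j\to1$ as Fourier multipliers on $\cal S'$, so $u=\sum_j u_j$ in $\cal S'$ and $\dual{u}{\varphi}=\sum_j\int u_j\varphi\,dx$, the pairing being an integral since each $u_j$ is a function of polynomial growth by Paley--Wiener. As $\supp\Phi_j$ sits in the rectangle \eqref{vNPP-cnd} with $R_k=2^{a_k}$, Proposition~\ref{mNPP-prop} with $\vec r=(\infty,\dots,\infty)$ gives $\nrm{u_j}{\infty}\le c\,2^{j(a_1/p_1+\dots+a_n/p_n)}\nrm{u_j}{L_{\vec p}}\le c\,2^{j(\sum_k a_k/p_k-s)}\norm{u}{F^{s,\vec a}_{\vec p,q}}$. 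To win back $j$-decay I would take a fattened cut-off $\tilde\Phi_j\equiv1$ on $\supp\Phi_j$, so $u_j=\tilde K_j*u_j$ with $\tilde K_j=\cal F^{-1}\tilde\Phi_j$, and transfer the localization onto the test function via $\int u_j\varphi=\int u_j\,(\tilde K_j(-\cdot)*\varphi)\,dx$. Since $\tilde K_j(-\cdot)*\varphi=\cal F^{-1}(\tilde\Phi_j(-\cdot)\hat\varphi)$ is a Schwartz function whose spectrum again meets only the region $|\xi|\gtrsim2^{j}$, its $L_1$-norm is $\cal O(2^{-jN})$ times a Schwartz seminorm of $\varphi$. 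The crude bound $|\int u_j\varphi|\le\nrm{u_j}{\infty}\,\nrm{\tilde K_j(-\cdot)*\varphi}{1}$ then carries the factor $2^{j(\sum_k a_k/p_k-s)-jN}$, and taking $N$ large makes $\sum_j$ converge, giving $|\dual{u}{\varphi}|\le C\,\norm{u}{F^{s,\vec a}_{\vec p,q}}$ times a fixed continuous seminorm on $\cal S(\Rn)$.

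For completeness, a Cauchy sequence $(u^{(i)})$ in $F^{s,\vec a}_{\vec p,q}$ is by the previous step Cauchy in $\cal S'$, so $u^{(i)}\to u$ in $\cal S'$ and hence $u^{(i)}_j\to u_j$ in $\cal S'$ for each $j$. The step I expect to be the main obstacle is upgrading this to the almost-everywhere convergence needed for a Fatou argument: the $u^{(i)}_j$ all have spectra in one fixed compact rectangle and are bounded in $L_{\vec p}$, so Proposition~\ref{mNPP-prop}, applied also to their derivatives via a Bernstein-type estimate, bounds them uniformly in $L_\infty$ and renders them equicontinuous, which turns $\cal S'$-convergence into locally uniform, in particular pointwise, convergence. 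Granting this, Fatou's lemma applied successively in the inner integrals and in the $\ell_q$-sum of \eqref{fspq''-eq} yields $\norm{u-u^{(i)}}{F^{s,\vec a}_{\vec p,q}}\le\liminf_k\norm{u^{(k)}-u^{(i)}}{F^{s,\vec a}_{\vec p,q}}$, which tends to $0$; in particular $u\in F^{s,\vec a}_{\vec p,q}$ and $u^{(i)}\to u$ in the quasi-norm. The Besov statements follow by the same scheme with the roles of the $\ell_q$-sum and the $L_{\vec p}$-norm interchanged as in \eqref{bspq''-eq}: here everything is simpler, since the two operations no longer interact, classical Fatou suffices, and the values $p_k=\infty$ are admissible because only the direct inequality of Proposition~\ref{mNPP-prop}, and not any duality argument, enters.
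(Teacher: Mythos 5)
Your proposal is correct and takes essentially the paper's route: the mixed-norm Nikol$'$skij--Plancherel--Polya inequality of Proposition~\ref{mNPP-prop} with $\vec r=(\infty,\dots,\infty)$, fattened cut-offs $\tilde\Phi_j$ transferring the localization onto the test function so that the resulting series is dominated by a $B^{\fracci{a_1}{p_1}+\dots+\fracci{a_n}{p_n}-s}_{1,1}$-type expression controlled by Schwartz seminorms, and completeness via the limit in $\cal S'(\Rn)$ followed by iterated Fatou --- the paper merely packages the embeddings through the Besov case using \eqref{BFB-emb} and $q=\infty$, whereas you argue on $F^{s,\vec a}_{\vec p,q}$ directly, which amounts to the same thing given your pointwise bound $2^{sj}\nrm{u_j}{L_{\vec p}}\le \norm{u}{F^{s,\vec a}_{\vec p,q}}$. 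The one step you flag as the main obstacle is in fact immediate: since $u^{(i)}_j(x)=\dual{u^{(i)}}{\check\Phi_j(x-\cdot)}$, convergence in $\cal S'$ already yields pointwise convergence at every $x$, so your (valid but unnecessary) equicontinuity/Arzel\`a--Ascoli detour can be dropped, exactly as in the paper's proof.
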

\begin{proof}
By \eqref{BFB-emb}, it suffices to show \eqref{SFS-emb} in the Besov
case; and $q=\infty$ is also enough. 
That $ \cal S(\Rn)\hookrightarrow B^{s,\vec a}_{\vec p,\infty}(\Rn)$ 
is a direct consequence of the definition, 
with a proof similar to the
isotropic one in \cite[2.3.3]{T2}.

The continuity of $B^{s,\vec a}_{\vec p,\infty}(\Rn)\hookrightarrow
\cal S'(\Rn)$ can also be carried over from \cite[2.3.3]{T2}. In so doing, the
anisotropies can be handled by Proposition~\ref{mNPP-prop} 
(like in the proof of Theorem~\ref{sob''-thm}), 
which for any $u\in B^{s,\vec a}_{\vec p,\infty}(\Rn)$ gives 
\begin{equation}
 \Norm{u_j}{L_\infty}\le c 2^{j(\fracci{a_1}{p_1}+\dots+\fracci{a_n}{p_n})}
\norm{u_j}{L_{\vec p}}. 
\end{equation}
Then one can set $\tilde\Phi_j=\Phi_{j-1}+\Phi_j+\Phi_{j+1}$, so that
$\tilde\Phi_j\equiv1$ on $\supp\Phi_j$; this gives for $u\in B^{s,\vec
a}_{\vec p,\infty}$ and $\psi\in \cal S(\Rn)$
\begin{equation}
  \begin{split}
  |\dual{u}{\overline{\psi}}|&\le \sum_{j=0}^\infty
  |\dual{u}{\overline{\cal F^{-1}(\Phi_j\tilde\Phi_j\hat\psi)}}|
\\
  &\le \sum_{j=0}^\infty
  c 2^{j(\fracci{a_1}{p_1}+\dots+\fracci{a_n}{p_n})}\norm{u_j}{L_{\vec p}}
  \norm{\cal F^{-1}(\tilde\Phi_j\hat\psi)}{L_1}
\\
  &\le c' \norm{u}{B^{s,\vec a}_{\vec p,\infty}}
     \sum_{j=0}^\infty 2^{(-s+\fracci{a_1}{p_1}+\dots+\fracci{a_n}{p_n})j}
  \norm{\cal F^{-1}(\tilde\Phi_j\hat\psi)}{L_1}.
  \end{split}
\end{equation}
Formally the last infinite series has the structure of a 
$B^{\fracci{a_1}{p_1}+\dots+\fracci{a_n}{p_n}-s}_{1,1}$-norm on $\psi$.
However, the fact that the family $(\tilde\Phi_j)$ appears instead of the
decomposition $(\Phi_j)$ is inconsequential, for the proof of the
continuity of $\cal S\hookrightarrow B^{s}_{p,q}$ also gives an estimate in
this situation. So for some seminorm $p_N$ on $\cal S$ it holds that
$|\dual{u}{\overline{\psi}}|\le c''\norm{u}{B^{s,\vec a}_{\vec p,\infty}}
p_N(\psi)$, as desired.

Completeness is shown for the $F$-case
(\cite[2.3.3]{T2} is without details on this). Given a fundamental
sequence $(u_l)$ in $F^{s,\vec a}_{\vec p,q}$, the just shown continuity
gives that $u_l-u_m$ belongs eventually to any given neighbourhood of $0$
in $\cal S'(\Rn)$; hence $(u_l)$ converges in $\cal S'$ to some $u$. So
$\check\Phi_j*u_l\to \check\Phi_j*u$ in $\cal S'$ for $l\to\infty$
(since the index $l$ refers to the sequence,  
we shall write $\check\Phi_j*u_l$ for the frequency modulations
by means of the Littlewood--Paley decomposition). 
Hence
$\check\Phi_j*u_l(x)=\dual{u_l}{\check\Phi_j(x-\cdot)}$ converges pointwisely
to $\check\Phi_j*u(x)$.

It remains to prove that $u\in F^{s,\vec a}_{\vec p,q}(\Rn)$ and 
$u_l\to u$ in the topology of this space.  For $q<\infty$ the sum over $j\in\N_0$ can be seen
as an integration w.r.t.\ the counting measure, and then
$n+1$ applications of Fatou's lemma give
$\norm{u}{F^{s,\vec a}_{\vec p,q}}\le 
\liminf\norm{u_l}{F^{s,\vec a}_{\vec p,q}}<\infty$ 
(it is convenient that positive measurable functions always have
integrals in $[0,\infty]$ and that Tonelli's theorem implies the measurability
during the successive integrations). Given $\varepsilon>0$ and
$N$ so that $\norm{u_{m}-u_l}{F^{s,\vec a}_{\vec p,q}}<\varepsilon$ for
$m,\,l>N$, similar applications of Fatou's lemma gives
\begin{equation}
  \begin{split}
    \norm{u-u_l}{F^{s,\vec a}_{\vec p,q}}
  &\le \norm{(\liminf_m
\sum_{j}2^{sjq}|\check\Phi_j*(u_{m}-u_l)|^q)^{1/q}}{L_{\vec p}}
\\
  &\le
\limsup_m \norm{u_{m}-u_l}{F^{s,\vec a}_{\vec p,q}} <\varepsilon.
  \end{split}
\end{equation}
This shows the convergence in $F^{s,\vec a}_{\vec p,q}$. For $q=\infty$ 
it is seen directly that $\sup_j|\check\Phi_j*u(x)|\le \liminf_{l}
(\sup_j |\check\Phi_j*u_l(x)|)$, and thence the conclusions follow as above.
In the $B$-cases the ingredients are the same, only with the $j$-integration
carried out last.
\end{proof}

\begin{rem}   \label{hist-rem}
To conclude we comment on the background.

Anisotropic Sobolev (or Bessel potential) spaces    
$H^{s,\vec a}_p$ and Besov spaces  $B^{s,\vec a}_{p,q}$ 
(with $1<p<\infty$ respectively $1\le p,q\le \infty$, partly with $s>0$) 
and in particular embedding relations between them 
have been investigated e.g.\ in the monographs of
Nikol$'$skij \cite{Nik75} and  
Besov, Il'in and Nikol$'$skij \cite{BIN78}, \cite{BIN96}. 
Nikol$'$skij and his co-authors departed from a definition based on derivatives and differences.
For a characterization of anisotropic Lizorkin--Triebel spaces 
$F^{s,\vec a}_{\vec p,q}$ by differences we refer to
Yamazaki \cite[Thm.~4.1]{Y2} and Seeger \cite{Se89}
(for $\vec p=(p,\dots,p)$, but general $\vec a$). 
Since Sobolev spaces $H^{s,\vec a}_p$ represent
particular cases of the Lizorkin--Triebel scale,
there is some overlap between our work and these quoted books.
In connection with anisotropic Lizorkin--Triebel spaces we would like to
mention also  
Triebel \cite{Tr77} and St\"ockert, Triebel \cite{StT}
for the Fourier-analytic characterization;
and concerning the $\varphi$-transform and characterization by atoms, we refer 
to Dintelmann \cite{Di96} and Farkas \cite{Fa00}.

As others before us, we have preferred to define the anisotropy in terms of
the function $|\cdot|_{\vec a}$.
This procedure is well known and goes back at least to the 1960's.
A list of some basic properties of $|\cdot|_{\vec a}$
can be found in \cite{Y1}, together with further historical remarks.

One advantage of using $|\cdot|_{\vec a}$ is that it gives an efficient
formalism where the powerful tools from Fourier analysis and distribution
theory are easy to invoke. 
This is clearly illustrated by the rather manageable proofs of e.g.\
Theorems~\ref{vNPP-thm} and \ref{sob''-thm}.
In general the Fourier-analytic approach gives 
streamlined, if not simpler proofs of some basic properties of the spaces.

\bigskip

Finally we note that this paper has been partly motivated by some works of
Weidemaier \cite{Wei98,Wei02} and Denk, Hieber and Pr{\"u}ess \cite{DeHiPr}, 
whose results on traces in connection with parabolic problems
can be \emph{roughly} summarised
as follows: taking traces by setting $x_1=0$ 
in the anisotropic $F^{s,\vec a}_{\vec p,q}$-spaces, with $s=2$
and $\vec a=(1,\dots,1,2)$, leads to trace spaces in the 
\emph{same} scale, and only to Besov spaces if $p_1=\dots=p_n$.

Concerning the trace problem for the full scale $F^{s,\vec a}_{\vec p,q}$,
i.e.\ with general $s$, $\vec a$, $\vec p$ and $q$,
we have corroborated this conclusion in another
joint work \cite{JoSi06}. 
With the present article our intention was to extract some 
preparations that should be of independent interest.
\end{rem}

%

\begin{thebibliography}{Yam86b}

\bibitem[BIN79]{BIN78}
O.~V. Besov, V.~P. Ilin, and S.~M. Nikol$'$skij, \emph{Integral
  representations of functions and imbedding theorems}, V. H. Winston \& Sons,
  Washington, D.C., 1978--79, Translated from the Russian, Scripta Series in
  Mathematics, Edited by Mitchell H. Taibleson.

\bibitem[BIN96]{BIN96}
\bysame, \emph{Integral representations of functions and imbedding theorems},
  Nauka, Moscow, 1996, 2nd edition (in russian).

\bibitem[BM01]{BrMi01}
H.~Brezis and P.~Mironescu, \emph{Gagliardo-{N}irenberg, composition and
  products in fractional {S}obolev spaces}, J. Evol. Equ. \textbf{1} (2001),
  387--404.

\bibitem[DHP]{DeHiPr}
R.~Denk, M.~Hieber, and J.~Pr{\"u}ess, \emph{Optimal {$L_p$-$L_q$}-regularity
  for parabolic problems with inhomogeneous boundary data}, Preprint,
  University of Konstanz, Germany,No.~205, 2005.


\bibitem[Di96]{Di96}
P.~Dintelmann, \emph{Fourier multipliers between weighted anisotropic function spaces:
Besov-Triebel spaces}.
Z. Anal. Anwend. \textbf{15} (1996), 579--601.



\bibitem[Fa00]{Fa00}
W.~Farkas, \emph{Atomic and subatomic decompositions in anisotropic function spaces}.
Math. Nachr. \textbf{209} (2000), 83--113.


\bibitem[H{\"o}r85]{H}
L.~H{\"o}rmander, \emph{{The analysis of linear partial differential
  operators}}, Grundlehren der mathematischen Wissenschaften, Springer Verlag,
  Berlin, 1983, 1985.

\bibitem[Jaw77]{J}
B.~Jawerth, \emph{{Some observations on Besov and Lizorkin--Triebel spaces}},
  Math. Scand. \textbf{40} (1977), 94--104.

\bibitem[Joh95]{JJ94mlt}
J.~Johnsen, \emph{{Pointwise multiplication of Besov and Triebel--Lizorkin
  spaces}}, Math. Nachr. \textbf{175} (1995), 85--133.

\bibitem[JS]{JoSi06}
J.~Johnsen and W.~Sickel, \emph{{On the trace problem for Lizorkin--Triebel
spaces with mixed norms}}, Preprint R-2006-31, Aalborg University, 2006
(submitted).

\bibitem[Nik75]{Nik75}
S.~M. Nikol$'$skij, \emph{Approximation of functions of several variables and
  embedding theorems}, Springer, 1975.

\bibitem[ST87]{ScTr87}
H.-J. Schmeisser and H.~Triebel, \emph{Topics in fourier analysis and function
  spaces}, Geest \and Portig, 1987, Wiley, Chichester 1987.

\bibitem[Se89]{Se89}
A. Seeger, \emph{A note on Triebel-Lizorkin spaces}. Banach Center Publ.
{\bf 22}, PWN Polish Sci. Publ. Warsaw, 1989, 391-400.

\bibitem[St{\"o}78]{Stck78}
B.~St{\"o}ckert, \emph{Ungleichungen vom
  {P}lancherel-{P}\'olya-{N}ikolsk\u\i-{T}yp in gewichteten {$L\sp{\Omega
  }\sb{p}$}-{R}\"aumen mit gemischten {N}ormen}, Math. Nachr. \textbf{86}
  (1978), 19--32.

\bibitem[StT79]{StT}
B.~St{\"o}ckert and H.~Triebel, \emph{Decomposition methods for function spaces of 
$B^s_{p,q}$ and $F^s_{p,q}$ type}, Math. Nachr. \textbf{89} (1979), 247-267.


\bibitem[Tri77]{Tr77}
H.~Triebel, \emph{{Fourier analysis and function spaces}}, Teubner--Texte zur
  Mathematik, vol.~7, {Teubner Verlagsgesellschaft}, Leipzig, 1977.

\bibitem[Tri78]{T0}
H.~Triebel, \emph{{Spaces of Besov--Hardy--Sobolev type}}, Teubner--Texte zur
  Mathematik, vol.~15, {Teubner Verlagsgesellschaft}, Leipzig, 1978.

\bibitem[Tri83]{T2}
\bysame, \emph{{Theory of function spaces}}, Monographs in mathematics,
  vol.~78, Birkh{\"a}user Verlag, Basel, 1983.

\bibitem[Uni66]{Un66}
A.~P.~Unin$'$skij, \emph{Embedding theorems for a class of 
functions with mixed  norm},
Dokl. Akad. Nauk. SSSR 166 (1966), 806-808.

\bibitem[Uni70]{Un70}
\bysame, \emph{Inequalities in a mixed norm for trigonometric polynomials
and entire functions of finite order},
Embedding theorems and applications. Nauka, Moskva, 1970, 212-218.


\bibitem[Wei98]{Wei98}
P.~Weidemaier, \emph{Existence results in {$L\sb p$}-{$L\sb q$} spaces for
  second order parabolic equations with inhomogeneous {D}irichlet boundary
  conditions}, Progress in partial differential equations, Vol. 2
  (Pont-\`a-Mousson, 1997), Pitman Res. Notes Math. Ser., vol. 384, Longman,
  Harlow, 1998, pp.~189--200.

\bibitem[Wei02]{Wei02}
\bysame, \emph{Maximal regularity for parabolic equations with inhomogeneous
  boundary conditions in {S}obolev spaces with mixed {$L\sb p$}-norm},
  Electron. Res. Announc. Amer. Math. Soc. \textbf{8} (2002), 47--51
  (electronic).

\bibitem[Yam86a]{Y1}
M.~Yamazaki, \emph{{A quasi-homogeneous version of paradifferential operators,
  I. Boundedness on spaces of Besov type}}, J. Fac. Sci. Univ. Tokyo Sect. IA,
  Math. \textbf{33} (1986), 131--174.

\bibitem[Yam86b]{Y2}
\bysame, \emph{{A quasi-homogeneous version of paradifferential operators, II.
  A symbol calculus}}, J. Fac. Sci. Univ. Tokyo Sect. IA, Math. \textbf{33}
  (1986), 311--345.

\bibitem[Yam88]{Y3}
\bysame, \emph{{A quasi-homogeneous version of the microlocal analysis for
  non-linear partial differential equations}}, Japan. J. Math. \textbf{14}
  (1988), 225--260.

\end{thebibliography}
%
\providecommand{\bysame}{\leavevmode\hbox to3em{\hrulefill}\thinspace}

\end{document}